\newtheorem{theorem}{Theorem}[section]
\newtheorem{corollary}[theorem]{Corollary}
\newtheorem{proposition}[theorem]{Proposition}
\newtheorem{lemma}[theorem]{Lemma}
\newtheorem{lem}[theorem]{Lemma}
\newtheorem{definition}[theorem]{Definition}
\newtheorem{problem}[theorem]{Problem}
\newcommand{\om}{\omega}
\newcommand{\half}{\frac{1}{2}}
\newcommand{\quart}{\frac{1}{4}}
\newcommand{\sm}{\setminus}
\newcommand{\sub}{\subseteq}
\DeclareMathOperator{\ex}{ex}
\DeclareMathOperator{\sat}{sat}
\renewcommand{\c}[1]{\mathcal{#1}}
\renewcommand{\c}[1]{\mathcal{#1}}
\newcommand{\f}[2]{\frac{#1}{#2}}
\newcommand{\floor}[1]{\lfloor #1\rfloor}
\title{Linear Bounds for Cycle-free Saturation Games}
\author{Sean English\footnote{Department of Mathematics, University of Illinois at Urbana-Champaign {\tt senglish@illinois.edu}.} \and Tom\'a\v{s} Masa\v{r}\'ik\footnote{Department of Applied Mathematics, Faculty of Mathematics and Physics, Charles University, Prague, Czech Republic \& Faculty of Mathematics, Informatics and Mechanics, University of Warsaw, Warsaw, Poland \& Department of Mathematics, Simon Fraser University, Burnaby, BC, Canada {\tt masarik@kam.mff.cuni.cz}} \and Grace McCourt\footnote{Department of Mathematics, University of Illinois at Urbana-Champaign {\tt mccourt4@illinois.edu}.} \and Erin Meger\footnote{Department of Mathematics and Statistics, Concordia University {\tt erin.meger@concorida.ca}.}  \and Michael S. Ross\footnote{Department of Mathematics, Iowa State University {\tt msross@iastate.edu}} \and Sam Spiro\footnote{Department\ of Mathematics, UC San Diego {\tt sspiro@ucsd.edu}.}}
\date{}
\begin{document}
	
	\maketitle
		\begin{abstract}
		Given a family of graphs $\mathcal{F}$, we define the $\mathcal{F}$-saturation game as follows.  Two players alternate adding edges to an initially empty graph on $n$ vertices, with the only constraint being that neither player can add an edge that creates a subgraph in $\mathcal{F}$.  The game ends when no more edges can be added to the graph.  One of the players wishes to end the game as quickly as possible, while the other wishes to prolong the game.  We let $\textrm{sat}_g(n,\mathcal{F})$ denote the number of edges that are in the final graph when both players play optimally.
		
		In general there are very few non-trivial bounds on the order of magnitude of $\textrm{sat}_g(n,\mathcal{F})$. In this work, we find collections of infinite families of cycles $\mathcal{C}$ such that $\textrm{sat}_g(n,\mathcal{C})$ has linear growth rate.
	\end{abstract}
	
	\section{Introduction}
	Given a family of graphs $\mathcal{F}$, we say a graph $G$ is $\mathcal{F}$-saturated if $G$ does not contain a subgraph isomorphic to any $F\in \mathcal{F}$, but adding any edge to $G$ creates a copy of some $F\in\mathcal{F}$. The study of $\mathcal{F}$-saturated graphs is one of the main topics of interest in extremal combinatorics. Particularly well studied quantities include the \textit{extremal number} $\ex(n,\c{F})$, which denotes the maximum number of edges in an $n$-vertex $\c{F}$-saturated graph, and the \textit{saturation number} $\sat(n,\c{F})$,  which denotes the minimum number of edges in an $n$ vertex $\c{F}$-saturated graph. If $\c{F}=\{F\}$, then we will often denote $\c{F}$ simply by $F$.
	
	For a family of graphs $\c{F}$, we define the $\c{F}$-saturation game as follows.  The game is played by two players, Max and Mini.  Starting with Max, the two players alternate adding edges to an empty graph on $n$ vertices, with the only constraint being that neither player can add an edge that creates a subgraph in $\mathcal{F}$. The game ends once the graph becomes $\mathcal{F}$-saturated.
	We define the $\c{F}$-game saturation number $\mathrm{sat}_g\left(n,\mathcal{F}\right)$ to be the number of edges in the graph at the end of the game when both players play optimally. 
  
  It is worth noting that
	\begin{equation}
	\sat(n,\mathcal{F})\leq \sat_g(n,\mathcal{F})\leq \ex(n,\mathcal{F}).
	\label{eq:basic}\end{equation}

  The first $\c{F}$-saturation game to be considered was the $C_3$-saturation game, which was introduced by F{\"u}redi, Reimer, and Seress \cite{FRS} based on another game proposed by Hajnal.  Despite being introduced nearly 30 years ago, very little is known about $\sat_g(n,C_3)$.  In their original paper,  F{\"u}redi, Reimer, and Seress proved that $\sat_g(n,C_3)\ge \half n\log(n)+o(n\log(n))$.  The only other non-trivial bound for $\sat_g(n,C_3)$ was obtained recently by  Bir{\'o}, Horn, and Wildstrom \cite{BHW} who showed that $\sat_g(n,C_3)\le \f{26}{121}n^2+o(n^2)$.
	
	The systematic study of saturation games was initiated by Carraher, Kinnersley, Reiniger, and West \cite{CKRW}. In particular, they obtained bounds for the game saturation number of paths and stars, and these results were refined by Lee and Riet \cite{LR}.  Hefetz, Krivelevich, Naor, and Stojakovi{\'c} \cite{HKNS} further generalized saturation games to avoiding other graph properties such as colorability, and in particular Keusch \cite{K} proved asymptotically tight bounds for the game where both players must keep the graph $4$-colorable. Saturation games have also been generalized to other combinatorial structures such as hypergraphs and directed graphs \cite{directedgraphs, Patkos}.

	In view of \eqref{eq:basic}, the problem of determining the order of magnitude of $\sat_g(n,\c{F})$ is trivial whenever $\ex(n,\c{F})$ and $\sat(n,\c{F})$ have the same order of magnitude, and this will often be the case if $\c{F}$ contains a tree.  Perhaps the simplest non-trivial case of this problem is to try and determine the order of magnitude of $\sat_g(n,\c{C})$ when $\c{C}$ is a family of cycles.
	
  A basic question in this direction that one could ask is: what families of cycles $\c{C}$ have quadratic game saturation number?  It is well known that $\ex(n,C_{2k})=o(n^2)$ for all $k$, so by~\eqref{eq:basic} a necessary condition to have $\sat_g(n,\c{C})=\Theta(n^2)$ is that $\c{C}$ consists only of odd cycles.  The last author~\cite{spiro} showed that a sufficient condition for a set of odd cycles $\c{C}$ to have quadratic game-saturation number is to have $C_3,C_5\in \c{C}$, in which case we have $\sat_g(n,\c{C})\ge \f{6}{25}n^2+o(n^2)$.  Carraher, Kinnersley, Reiniger, and West~\cite{CKRW} showed that $\sat_g(n,\c{C}^o)=\floor{n^2/4}$ where $\c{C}^o$ is the set of all odd cycles \cite{CKRW}, though the last author~\cite{spiro} showed that in general a set of odd cycles containing $C_3$ and $C_5$ need not have game saturation number asymptotic to $\quart n^2$.
	
	Similarly one could ask for necessary and sufficient conditions for a family of cycles $\c{C}$ to have linear game saturation number, and for this problem much less is known.  Let $\mathcal{C}_{\geq k}$ (respectively, $\mathcal{C}^o_{\geq k}$) denote the set of all cycles (respectively, all odd cycles) of length at least $k$. The following result of Erd\H{o}s and Gallai shows that a trivial condition to have $\sat_g(n,\c{C})=O(n)$ is for $\c{C}$ to contain every cycle which is at least as large as some cutoff value $k$.
  \begin{theorem}[{\cite[Theorem~2.7]{EG}}]\label{thm:EG}
		For all $n,k$ we  have \[\ex(n,\mathcal{C}_{\geq k})\le \half(k-1)(n-1).\]
	\end{theorem}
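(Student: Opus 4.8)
The family $\c{C}_{\ge k}$ consists of all cycles of length at least $k$, so a graph is $\c{C}_{\ge k}$-free precisely when its circumference (the length of its longest cycle) is at most $k-1$. Thus the statement is equivalent to: every $n$-vertex graph $G$ with no cycle of length $\ge k$ satisfies $e(G)\le \half(k-1)(n-1)$. The bound is sharp, as witnessed by any graph obtained by gluing $t$ copies of $K_{k-1}$ together along single cut vertices in a tree-like fashion: such a graph has $n=t(k-2)+1$ vertices, circumference exactly $k-1$, and $t\binom{k-1}{2}=\half(k-1)(n-1)$ edges. This extremal example is the key diagnostic for the proof: it shows that the obstruction to a long cycle is \emph{not} small minimum degree (its cut vertices have large degree) but rather the presence of cut vertices. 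I would therefore argue by strong induction on $n$, with the heart of the argument being a reduction to the $2$-connected case.

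If $n\le k-1$ the bound is immediate, since then $e(G)\le \binom{n}{2}\le \half(k-1)(n-1)$; so assume $n\ge k$. I first dispose of the reducible cases. If $G$ is disconnected with components $G_1,\dots,G_m$, applying the inductive hypothesis to each and summing gives $e(G)\le \half(k-1)\sum_i(|V(G_i)|-1)\le \half(k-1)(n-1)$. If $G$ is connected but has a cut vertex $v$, write $G$ as the union of two subgraphs $G_1,G_2$ meeting only at $v$; since every cycle of $G$ lies entirely in one $G_i$, each $G_i$ is again $\c{C}_{\ge k}$-free, and applying induction to each and summing (using $|V(G_1)|+|V(G_2)|=n+1$) once more yields the bound. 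More generally this reduction shows it is enough to prove the estimate blockwise, via the identity $\sum_{\text{blocks } B}(|V(B)|-1)=n-1$ valid for any connected graph.

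It remains to treat the case where $G$ is $2$-connected, which is the main obstacle. Here I would also apply a low-degree reduction: if some vertex $v$ has $\deg(v)\le \half(k-1)$, delete it and apply induction to $G-v$, giving $e(G)=e(G-v)+\deg(v)\le \half(k-1)(n-2)+\half(k-1)=\half(k-1)(n-1)$. So we may assume $G$ is $2$-connected with minimum degree $\delta>\half(k-1)$, hence $2\delta\ge k$. At this point I would invoke the structural fact, due to Dirac, that a $2$-connected graph contains a cycle of length at least $\min(n,2\delta)$; this is the step that genuinely uses $2$-connectivity. It is proved by taking a longest path $x_0x_1\cdots x_p$ (whose endpoints have all their neighbours on the path) and running the standard rotation/crossing argument on the neighbourhoods of $x_0$ and $x_p$: if $\deg(x_0)+\deg(x_p)>p$ then two of these neighbours ``cross,'' producing a cycle through every vertex of the path, and $2$-connectivity then lets one enlarge this cycle. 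Since $n\ge k$ and $2\delta\ge k$, this forces a cycle of length $\ge k$, contradicting $\c{C}_{\ge k}$-freeness; hence this final case cannot occur and the induction closes. The delicate point is exactly this structural lemma: minimum degree alone does not force a long cycle (the extremal graph above has $\delta=k-2$ yet circumference only $k-1$), so $2$-connectivity must be used essentially, and coaxing the rotation argument to deliver the sharp bound $\min(n,2\delta)$ rather than merely $\delta+1$ is where the real work lies.
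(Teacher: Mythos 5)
Your proof is correct, but note that the paper does not prove this statement at all: it is quoted as Theorem~2.7 of Erd\H{o}s and Gallai and used as a black box, so there is no in-paper argument to compare against. What you give is the standard modern derivation: the base case $n\le k-1$, the reduction to a single block via the identity $\sum_B(|V(B)|-1)=n-1$ (each $K_2$ block trivially satisfying the bound for $k\ge 3$), the deletion of a vertex of degree at most $\tfrac12(k-1)$, and finally the observation that a $2$-connected graph with $n\ge k$ and $2\delta\ge k$ would contain a cycle of length at least $\min(n,2\delta)\ge k$ by Dirac's theorem, so the last case is vacuous. All the arithmetic checks out (in particular $\delta>\tfrac12(k-1)$ does force $2\delta\ge k$ for integer $\delta$, in both parities of $k$), and the sharpness example of $K_{k-1}$'s glued tree-like at cut vertices is the right one. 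The only ingredient you do not fully prove is Dirac's $\min(n,2\delta)$ theorem, which you invoke by name with a correct statement and a reasonable sketch; that is an acceptable dependency, though it is genuinely where the hard work of the theorem is hidden (the original Erd\H{o}s--Gallai proof instead runs a longest-path argument directly inside the induction rather than routing through $2$-connectivity and Dirac). One cosmetic caveat: the theorem as stated in the paper implicitly assumes $k\ge 3$, and your base case uses this; for $k\le 2$ the inequality fails for a path, so you are right to treat $k\ge 3$ as the intended range.
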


	Prior to this work, the only known non-trivial example of a set of cycles with linear game saturation number was the following.

  \begin{theorem}[{\cite[Theorem~1.4]{spiro}}]\label{thm:old}
		\[\sat_g(n,\mathcal{C}^o_{\geq 5})\le 2n-2.\]
	\end{theorem}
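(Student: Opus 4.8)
The plan is to exhibit a strategy for Mini (the player who wants to end the game quickly) guaranteeing that the final graph has at most $2n-2$ edges. I would start from two elementary observations. First, since adding an edge between two distinct components never creates any cycle, every $\mathcal{C}^o_{\geq 5}$-saturated graph is connected; in particular the graph at the end of the game is connected. Second, for non-adjacent $u,v$ the edge $uv$ is a legal move exactly when $G$ contains no simple $u$--$v$ path of even length at least $4$ (such a path together with $uv$ would close an odd cycle of length $\geq 5$, and conversely). Note that triangles and even cycles are always permitted, so the only obstruction to legality is the presence of long even paths.

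The main reduction is to the block (maximal $2$-connected component) structure. In a connected graph every edge lies in a unique block $B_1,\dots,B_m$, and $\sum_i (|B_i|-1) = n-1$. Hence if Mini can ensure that the final graph satisfies $e(B_i)\le 2(|B_i|-1)$ for every block, then $e(G)=\sum_i e(B_i)\le 2\sum_i(|B_i|-1)=2(n-1)=2n-2$, as desired. This per-block bound holds automatically for every clique of size at most $4$ (since $\binom{s}{2}\le 2(s-1)$ for $s\le 4$, with equality at $s=4$), for even cycles, and for \emph{books}; one checks that the only $\mathcal{C}^o_{\geq 5}$-free $2$-connected graphs violating it are sufficiently dense bipartite graphs (for $K_{a,b}$ the bound fails precisely when $(a-2)(b-2)>2$, e.g.\ $K_{4,4}$). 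Thus Mini's entire task reduces to preventing any block of the final graph from becoming a dense bipartite graph.

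This is where the real work lies. Because bipartite graphs contain no odd cycles at all, every partial bipartite graph is a legal position, so Max can try to build a dense bipartite block such as $K_{4,4}$ edge by edge without ever being blocked by the rule; Mini must actively disrupt this. The key mechanism is \emph{triangulation}: if the current graph contains a $4$-cycle $a_1b_1a_2b_2$, Mini may legally add the chord $a_1a_2$ (it only closes two triangles), and afterwards any new vertex joined to both $b_1$ and $b_2$ would create the even path $a_3b_1a_1a_2b_2$ of length $4$, making the corresponding edge forbidden. In other words, each chord Mini inserts freezes the local bipartite structure and caps its growth. I would therefore have Mini respond to Max by repeatedly chording the $4$-cycles that Max creates, maintaining after each of Mini's turns the invariant that every block is sparse in the sense $e(B)\le 2(|B|-1)$ (and, more strongly, that no block contains a large chordless bipartite part).

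The step I expect to be the main obstacle is exactly the bookkeeping of this triangulation strategy: Mini plays only one edge per Max edge, whereas a single move of Max can create many new $4$-cycles at once, so one must show that Mini can always keep up, i.e.\ that the sparsity invariant can be restored after Mini's single response, and that the chords Mini wishes to play are themselves legal (they must not complete an odd cycle of length $\geq 5$) and do not accidentally over-densify some block. Designing an invariant robust enough to be maintainable under every move of Max, yet strong enough to forbid $K_{4,4}$-type blocks, is the technical heart of the argument; once it is in place, the global bound $e(G)\le 2n-2$ follows immediately from the block reduction above.
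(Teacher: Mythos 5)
Your reduction to blocks is sound as far as it goes (a saturated graph is connected, $\sum_B(|B|-1)=n-1$ over the blocks of a connected graph, so a per-block bound $e(B)\le 2(|B|-1)$ would finish), but the argument has a genuine gap exactly where you flag it: you never produce an invariant that Mini can actually maintain, and the mechanism you propose (chording $4$-cycles) does not obviously suffice. Two concrete problems. First, Max is not obliged to hand you $4$-cycles: he can grow an even cycle of length $6$ or more inside a bipartite region, and a short chord of a $C_6$ (say $v_1v_3$ on the cycle $v_1\cdots v_6$) closes the $C_5$ given by $v_1v_3v_4v_5v_6$ and is therefore illegal for Mini, while the long chord $v_1v_4$ merely creates two $4$-cycles and no triangle; so the ``freeze the bipartite structure with a triangle'' step can be unavailable. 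Second, even when $4$-cycles do appear, a single edge of Max can create many of them at once, and Max can open a fresh bipartite region on every turn, so ``Mini keeps up by chording'' requires a carefully designed potential or invariant --- which is precisely the part you leave undone. You also assert without proof the classification that the only $2$-connected $\mathcal{C}^o_{\ge 5}$-free blocks violating $e(B)\le 2(|B|-1)$ are dense bipartite graphs; this is plausible but is itself a nontrivial structural claim that the proof would need.

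For comparison, the cited proof (and the guiding theme of this paper) avoids all of this by having Mini maintain a single, much stronger global invariant: at the end of each of her turns the graph is $\mathcal{C}_{\ge 5}$-free, i.e.\ has circumference at most $4$. This is maintainable by a strategy of the type used in the proof of Theorem~\ref{thm:fin} (keep every non-$K_2$ block containing a triangle, so that any cycle of length at least $5$ in a block would force a forbidden odd cycle), and then the bound $2n-2$ falls out immediately from the Erd\H{o}s--Gallai bound $\ex(n,\mathcal{C}_{\ge 5})\le\frac{1}{2}\cdot 4\cdot(n-1)$ of Theorem~\ref{thm:EG}, with no block-by-block edge counting and no analysis of bipartite blocks at all. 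If you want to salvage your approach, the missing invariant is essentially this one: rather than bounding edges per block after the fact, forbid long cycles outright throughout the game.
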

	
  The key idea in the proof of Theorem \ref{thm:old} is that Mini is able to play in the $\c{C}^o_{\ge  5}$-saturation game such that the graph stays $\mathcal{C}_{\geq 5}$-free throughout the game, so the result follows by Theorem~\ref{thm:EG}.  Our main goal is this paper is to generalize the approach used in Theorem~\ref{thm:old} to prove that $\sat_g(n,\c{C})=O(n)$ for many more families of cycles $\c{C}$.

  \subsection{Our Results}
	Our first result shows that if $\c{C}$ includes roughly half the cycles of length at least as large as some cutoff value $k$, then it has game saturation number which grows linearly with $n$.
	
	\begin{theorem}\label{thm:fin}
		Let $\mathcal{C}$ be a collection of cycles such that $C_3\not\in\mathcal{C}$, and such that there exists some $k\geq3$ so that for all $\ell\geq k$, either $C_\ell\in \mathcal{C}$ or $C_{\ell+1}\in\mathcal{C}$. Then
		\[
		\mathrm{sat}_g(n,\mathcal{C})\leq \half (3k-1)(n-1).
		\]
		
	\end{theorem}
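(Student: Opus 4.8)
The plan is to exhibit a strategy for Mini that keeps the graph free of every cycle of length at least $3k$ throughout the entire game. Since $\sat_g(n,\mathcal{C})$ is the value of the game under optimal play, any Mini strategy that always terminates with at most $X$ edges proves $\sat_g(n,\mathcal{C})\le X$. If the final graph has no cycle of length $\ge 3k$, then it is $\mathcal{C}_{\ge 3k}$-free, so Theorem~\ref{thm:EG} applied with parameter $3k$ bounds its edge count by $\half(3k-1)(n-1)$, which is exactly the desired estimate. Thus the whole difficulty is concentrated in the game-theoretic claim that Mini can prevent any cycle of length $\ge 3k$ from ever being completed.

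The combinatorial lever that makes this possible comes from unwinding the hypothesis. For every $\ell\ge k$ at least one of $C_\ell,C_{\ell+1}$ lies in $\mathcal{C}$, so a $\mathcal{C}$-free graph can never contain two cycles of consecutive lengths $\ell,\ell+1$ with $\ell\ge k$. Because $C_3\notin\mathcal{C}$, triangles may always be created legally. Combining these two facts: if $G$ is $\mathcal{C}$-free and contains a cycle $C$ of length $m\ge k+1$, then $C$ can have no chord joining two vertices at distance two along it, since such a chord produces a triangle together with a cycle of length $m-1$, and then $C$ and this $(m-1)$-cycle would form a forbidden consecutive pair. Equivalently, closing a path of length $m\ge k$ that already carries a distance-two chord into a cycle is always an illegal move, because it simultaneously creates cycles of lengths $m$ and $m+1$.

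This suggests Mini's strategy: she reacts to each of Max's moves by planting triangles (distance-two chords) along the long paths that Max is building, so as to maintain a structural invariant on the components of $G$ --- for instance that every sufficiently long path carries such a chord, or more robustly that each block has circumference less than $3k$. Under such an invariant, any attempt by either player to complete a cycle of length $\ge 3k$ would have to close a path that already contains a distance-two chord, which would create a forbidden consecutive pair of lengths (both at least $k$) and is therefore illegal. The threshold $3k$, rather than the naive $k+1$, is the price paid for Mini acting only reactively with a single move per round and for needing every shortened cycle to remain at length at least $k$, where consecutive cycles are still forbidden.

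The main obstacle is making Mini's strategy and invariant fully precise and proving that she can always restore the invariant with a single legal move after an arbitrary move of Max, including the adversarial cases where Max grows several long paths simultaneously, attaches his own chords to sabotage the spacing of Mini's triangles, or forces the would-be shortened cycles down toward length $k$. Handling these cases is where the constant $3k$ must be pinned down exactly, and one must separately dispose of the boundary situations (small $k$, small components, and the parity of the chord offsets) to guarantee that every repairing move is itself $\mathcal{C}$-free.
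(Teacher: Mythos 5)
Your overall architecture matches the paper's: have Mini maintain an invariant forcing the final graph to be $\mathcal{C}_{\ge 3k}$-free, then apply Theorem~\ref{thm:EG} with parameter $3k$; and your key lever --- that a $\mathcal{C}$-free graph cannot contain cycles of two consecutive lengths both at least $k$, so a triangle suitably attached to a long cycle is fatal --- is exactly the mechanism the paper exploits. However, the proposal has two genuine gaps, both of which you explicitly defer, and they are where essentially all of the work lies.

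First, the invariant. ``Every sufficiently long path carries a distance-two chord'' is not maintainable against an opponent who creates long cycles without ever closing a single chorded path (for instance by joining two far-apart vertices inside an already $2$-connected piece), and ``every block has circumference less than $3k$'' is the desired conclusion rather than a property you can restore with one move once it is violated. The paper's actual invariant is block-structural: every block that is not a $K_2$ contains a triangle, and the $K_2$ blocks form at most one non-trivial path, of length at most $3$; verifying that one reactive move per round restores this requires a case analysis on how the opponent's edge can merge $K_2$ blocks into a new block. Second, and more importantly, your distance-two-chord picture only handles triangles that lie on the long cycle itself. To rule out a cycle $C$ of length $\ell\ge 3k$, one must show that a $2$-connected block containing both $C$ and a triangle --- where the triangle may meet $C$ in $0$, $1$, $2$, or $3$ vertices --- contains a cycle of length $\ell'\ge \ell/3+2$ passing through the triangle's three vertices consecutively, whence cycles of lengths $\ell'-1$ and $\ell'$, both at least $k$, coexist; this uses Whitney's theorem to route connecting paths. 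The factor $3$ in the threshold comes precisely from the triangle's vertices splitting $C$ into at most three arcs, one of length at least $\ell/3$, not from Mini's reactive tempo as you suggest. Without these two steps the proposal is a plausible plan rather than a proof.
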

	Applying this theorem with $2k+1$ in place of $k$ immediately gives the following.
	\begin{corollary}
		For $k\ge 2$,  \[\sat_g(n, \c{C}_{\ge 2k+1}^o )\le (3k+1)(n-1).\]
	\end{corollary}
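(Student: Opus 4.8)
The plan is to describe a strategy for Mini that keeps the graph free of cycles of length at least $3k$ throughout the game; once this is established, applying Theorem~\ref{thm:EG} with $3k$ in place of $k$ bounds the number of edges in the final (saturated) graph by $\frac{1}{2}(3k-1)(n-1)$, which is exactly the claimed bound. Thus the entire content of the proof is the existence of such a strategy for Mini, and the role of the hypothesis on $\mathcal{C}$ is to let Mini sabotage the formation of any long cycle.

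The mechanism I would exploit is the following ``consecutive length'' observation. Suppose at some point the current graph $G$ is $\mathcal{C}$-free (as it always is during the game) and a player adds a legal edge $uv$ that closes a $u$--$v$ path of length $\ell-1 \ge 3k-1$ into a cycle of length $\ell$. Legality forces $C_\ell \notin \mathcal{C}$, and since $\ell-1 \ge k$ the hypothesis then forces $C_{\ell-1}\in\mathcal{C}$. Consequently, if $G$ also contained a $u$--$v$ path of length $\ell-2$, the same move would create the forbidden cycle $C_{\ell-1}$ and hence be illegal. This suggests the invariant Mini should maintain: for every pair $u,v$ in a common component, whenever there is a $u$--$v$ path of length $m \ge 3k-1$ there is also a $u$--$v$ path of length $m-1$ (a ``companion path''). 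A short induction on the moves shows that, as long as this invariant holds before each move, no legal move --- by either player --- can ever create a cycle of length $\ge 3k$, since such a cycle would come together with a forbidden companion.

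Mini's strategy would then be a pairing/repair strategy: she maintains the companion-path invariant, and on each of her turns she first responds to Max's previous edge by adding, if necessary, a short chord that re-creates companion paths along any newly formed long path. For instance, a chord $v_iv_{i+2}$ across a long path $v_0\cdots v_m$ yields the companion $v_0\cdots v_i v_{i+2}\cdots v_m$, which is one shorter, while only creating a triangle, and this is harmless because $C_3\notin\mathcal{C}$. I would set up the bookkeeping so that each of Max's edges threatens only a controlled, local piece of structure, which Mini neutralises with her single paired move.

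The main obstacle is proving that this repair is always possible: that after any move by Max, Mini has a single legal edge that both restores the companion-path invariant and does not itself create a cycle of length $\ge 3k$ (nor any forbidden cycle). Two points make this delicate. First, Mini's repairing chord creates auxiliary cycles of its own, and one must guarantee these are short and allowed; this is where slack in the threshold is needed, and I expect it to be the reason the cutoff is $3k$ rather than $k$ --- the path must be long enough to place a safe chord while still shortening the relevant $u$--$v$ distance by exactly one. Second, a single edge of Max's can simultaneously endanger several pairs $u,v$, so the argument must control how many ``dangerous'' pairs can arise between consecutive Mini moves and show that one chord suffices to handle them. Verifying these two points --- the legality and sufficiency of Mini's repair, uniformly over all configurations Max can present --- is the technical heart of the proof; everything else reduces to the clean application of Theorem~\ref{thm:EG} above.
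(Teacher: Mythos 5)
The intended proof of this corollary is a one-line deduction from Theorem~\ref{thm:fin}: the family $\mathcal{C}^o_{\ge 2k+1}$ contains no $C_3$ (since $2k+1\ge 5$) and, for every $\ell\ge 2k+1$, contains $C_\ell$ or $C_{\ell+1}$ (whichever index is odd), so Theorem~\ref{thm:fin} applies with $2k+1$ in place of its parameter and yields $\tfrac12\bigl(3(2k+1)-1\bigr)(n-1)=(3k+1)(n-1)$. You instead set out to re-prove the underlying structural statement from scratch, and your write-up has a genuine gap exactly where all the work lies. Your ``companion-path'' invariant --- for every pair $u,v$ and every $u$--$v$ path of length $m\ge 3k-1$ there is also one of length $m-1$ --- is a global condition ranging over all pairs of vertices, and you never establish that Mini can restore it with a single edge after an arbitrary move of Max; you explicitly defer this as ``the technical heart of the proof,'' so nothing is actually proved. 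A single edge of Max's can create long paths between many pairs simultaneously, and it is not shown that one chord repairs all of them, nor that a legal repairing chord always exists. (There is also an arithmetic slip: keeping the graph free of cycles of length at least $3k$ gives, via Theorem~\ref{thm:EG}, the bound $\tfrac12(3k-1)(n-1)$, which is not ``exactly the claimed bound'' $(3k+1)(n-1)$; it happens to be smaller, so it would suffice if the strategy worked, but the threshold you would actually need for this family's consecutive-length argument is tied to $2k+1$, not to an unrelated $k$.)

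The proof of Theorem~\ref{thm:fin} in the paper resolves precisely the difficulty you left open, but with a much more local invariant: Mini keeps every non-$K_2$ block containing a triangle and controls the unique short path of $K_2$ blocks. Because that invariant is phrased block-by-block rather than pair-by-pair, each move of Max disturbs only one block or one short $K_2$-block path, so a single response demonstrably suffices; the ``two consecutive cycle lengths'' conclusion is then extracted a posteriori from the final graph, using $2$-connectivity of a block to route any long cycle through a triangle and thereby produce cycles of lengths $\ell'$ and $\ell'-1$. If you want to salvage your approach, you should replace the pairwise path invariant by a block-level one of this kind; as written, the proposal is a plan rather than a proof.
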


Our next result applies to infinite families of cycles which are much sparser than those considered in Theorem \ref{thm:fin}. 

\begin{definition}\label{definition k dense}
	For $k\geq 5$, a family of cycles $\mathcal{C}$ is said to be \emph{$k$-dense} if the following properties hold: 
	
	\begin{enumerate}
		\item The cycles $C_k,C_{k+1}\in \mathcal{C}$, unless $k=5$ in which case we only require $C_5\in \mathcal{C}$,
		
		\item For all $\ell<k$, we have $C_{\ell}\notin \mathcal{C}$,
		
		\item For all $s \geq 3$ there exists $\ell$ with $s+2\le \ell\le 3+(k-2)(s-2)$ and $C_\ell\in \mathcal{C}$. 
	\end{enumerate}
\end{definition}
Roughly speaking, a family $\c{C}$ is $k$-dense if it contains no cycle up to length $k$, we have $C_k,C_{k+1}\in \c{C}$, and the gaps between consecutive cycle lengths in $\c{C}$ grow no faster than an exponential function in $k-2$.
	
	\begin{theorem}\label{thm:dense}
    If $\c{C}$ is a $k$-dense family of cycles, then
	\[
	\mathrm{sat}_g(n,\mathcal{C})\leq \half(k-1)(n-1)+1.
	\]
	\end{theorem}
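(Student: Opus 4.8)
The plan is to exhibit an explicit strategy for Mini that forces the final graph to contain no cycle of length $\ge k$ apart from long cycles all sharing one common edge; equivalently, to guarantee that deleting a single edge leaves a graph of \emph{circumference} less than $k$. Once this is in place, Theorem~\ref{thm:EG} applied to that edge-deleted graph bounds its edge count by $\half(k-1)(n-1)$, and restoring the one deleted edge yields $\mathrm{sat}_g(n,\mathcal{C})\le \half(k-1)(n-1)+1$, exactly as claimed. This mirrors the proof of Theorem~\ref{thm:old}, where Mini keeps the graph $\mathcal{C}_{\ge5}$-free and invokes Erd\H{o}s--Gallai; consistently, $\mathcal{C}^o_{\ge5}$ is itself $5$-dense, and the lone extra $+1$ here is the slack one pays for handling the much sparser families permitted by Definition~\ref{definition k dense} (Theorem~\ref{thm:old} achieves $2n-2$, whereas the present bound specializes to $2n-1$).

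The central object is an invariant I will call \emph{control}: the graph $G$ reached during play is controlled if, after deleting at most one distinguished edge, it has circumference less than $k$, and moreover every non-edge $e$ for which $G+e$ would create a \emph{new} cycle of length $\ge k$ is in fact an \emph{illegal} move, i.e.\ $G+e$ contains a cycle of $\mathcal{C}$. The key feature is the interaction with Max: if $G$ is controlled on Max's turn, then any legal edge Max adds cannot create a fresh long cycle, since by control such an edge would complete a forbidden cycle. Thus control is preserved through Max's move up to the possible appearance of the single tolerated long cycle, and the entire burden falls on Mini to re-establish control on her own turn.

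The heart of the argument — and the step I expect to be the main obstacle — is to show that after an arbitrary legal move by Max, Mini can add a single edge restoring control. The dangerous configurations are the pairs $u,v$ joined in $G$ by a long path along which no chord closes a cycle of $\mathcal{C}$, since these are exactly the pairs through which Max could later assemble a long cycle evading $\mathcal{C}$. Here I would use the third property of Definition~\ref{definition k dense}, read as the geometric gap bound $c_{i+1}\le 3+(k-2)(c_i-3)$ on consecutive lengths $c_i$ realized by $\mathcal{C}$. A $\mathcal{C}$-free graph of circumference $<k$ splits along its cut vertices into $2$-connected blocks, each supporting paths of length at most $k-2$; chaining $s-2$ blocks produces connecting paths of length at most $3+(k-2)(s-2)$, precisely the window $[\,s+2,\,3+(k-2)(s-2)\,]$ in the definition. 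The gap condition guarantees that some length of $\mathcal{C}$ falls in each such window, so every attempt to close a long cycle is forced through a forbidden length. Mini's repair move is the short blocking edge (legal, since the lengths it creates are below $k$) that installs this obstruction; showing that \emph{one} such edge neutralizes all dangerous pairs spawned by a single Max move is the crux I would need to verify carefully.

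Finally I would assemble the pieces. The empty graph is vacuously controlled, Mini maintains control on each of her turns by the repair step, and the interaction with Max's moves confines every long cycle to a single bundle through one fixed edge. At the end of play the $\mathcal{C}$-saturated graph $G$ therefore satisfies: deleting one edge gives circumference $<k$. Theorem~\ref{thm:EG} then bounds $|E(G)|$ by $\half(k-1)(n-1)+1$. The first two properties of Definition~\ref{definition k dense} are used throughout to drive this: $C_k,C_{k+1}\in\mathcal{C}$ provides two consecutive forbidden lengths, giving Mini a robust blocking tool that no nearby closure can slip past, while $C_\ell\notin\mathcal{C}$ for $\ell<k$ ensures that the short cycles Mini introduces are themselves legal moves and that blocks of size up to $k-1$ may be freely built to seed the recursion.
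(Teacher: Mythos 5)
Your high-level architecture matches the paper's: maintain an invariant throughout the game that keeps the circumference below $k$ (up to one final edge), use the third property of Definition~\ref{definition k dense} together with the block decomposition to argue that any edge closing a long cycle realizes every length in a window $[s+2,\,3+(k-2)(s-2)]$ and hence is illegal, and finish with Theorem~\ref{thm:EG} plus one extra edge for the game's last move. However, the proposal has a genuine gap exactly where you flag it: the entire content of the proof lives in the ``crux you would need to verify carefully,'' and the invariant you propose (``control'') is not strong enough to carry it. Two specific problems. First, to conclude that a chord between vertices at block distance $s$ is illegal, it is not enough that each block supports paths of length \emph{at most} $k-2$; you need paths of \emph{every} length from the geodesic length up to $k-2$ inside each intermediate block, so that all lengths in the window are realized and the one guaranteed forbidden length is actually hit. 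A bare circumference bound does not provide this. The paper's invariant (``$k$-fantastic'' graphs) builds in exactly the needed structure: every block has a dominating root lying closest to a fixed hub vertex $h$, and every cut vertex is ``finished,'' i.e.\ is the endpoint of a path of length $k-3$ avoiding the root in its stem block; Lemmas~\ref{lemma paths of every length from roots} and~\ref{lemma path lengths between F and H} then deliver all lengths in the window. Mini's repair move does not ``install an obstruction'' making dangerous pairs illegal one at a time --- a single edge could never neutralize all of them --- rather, it restores the structural invariant under which dangerous pairs are \emph{already} illegal.

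Second, your proposal never addresses the moves that actually force the elaborate bookkeeping: Max joining two isolated vertices, or attaching an isolated vertex to an unfinished part of the graph. Mini cannot leave a floating $K_2$ (it is an unrooted component violating connectivity of the structure), so she must attach it to the hub $h$, and repeated moves of this kind accumulate exceptional configurations (the paper's $h$-umbrellas and nearly $h$-dominated blocks) that require their own case analysis (Lemmas~\ref{lemma H dominate okay} and~\ref{lemma mini monster I dealing with H}, and Cases 1--5 of Proposition~\ref{proposition mini monster II all the cases}), and are also where the hypothesis $C_{k+1}\in\mathcal{C}$ for $k\ge 6$ is consumed --- a hypothesis your outline invokes only vaguely as a ``robust blocking tool.'' A minor further point: tolerating a distinguished long-cycle edge \emph{throughout} the game, as your control invariant does, would itself damage the window argument (long cycles inside a block break the root/finished-path structure); in the paper the circumference stays below $k$ after every Mini turn, and the $+1$ arises only because the game may terminate on Max's move, leaving $G^{(T)}=G^{(T-1)}+e$ with $G^{(T-1)}$ $k$-fantastic. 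So the skeleton is right, but the load-bearing definition and the case analysis that make it work are missing.
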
	\begin{corollary}
  Let $\c{C}=\bigcup_{r\ge 0} \{C_{3^r+4}\}=\{C_5,C_7,C_{13},C_{31},C_{85},\ldots\}$. Then 
		\[
		\sat_g(n,\c{C})\le 2n-1. 
		\]
	\end{corollary}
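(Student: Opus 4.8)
The plan is to apply Theorem~\ref{thm:dense} directly with $k=5$. If we can show that $\c{C}=\{C_{3^r+4}:r\ge 0\}$ is a $5$-dense family in the sense of Definition~\ref{definition k dense}, then the theorem immediately yields
\[
\sat_g(n,\c{C})\le \half(5-1)(n-1)+1=2(n-1)+1=2n-1,
\]
which is exactly the claimed bound. So the entire task reduces to verifying the three conditions in the definition of $5$-density for this particular family.

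Conditions (1) and (2) should be essentially free. For $k=5$ condition (1) only demands $C_5\in\c{C}$, and indeed the $r=0$ term is $C_{3^0+4}=C_5$. Condition (2) asks that $C_\ell\notin\c{C}$ for all $\ell<5$; this is clear since every member of $\c{C}$ has length $3^r+4\ge 5$. The real content is condition (3), which for $k=5$ reads: for every $s\ge 3$ there is some $\ell$ with $s+2\le \ell\le 3+(5-2)(s-2)=3s-3$ and $C_\ell\in\c{C}$. Since the lengths appearing in $\c{C}$ are exactly the numbers $3^r+4$, I would rewrite the requirement $\ell=3^r+4\in[s+2,\,3s-3]$ in terms of $r$ by subtracting $4$, so that condition (3) becomes the purely number-theoretic assertion that the interval $[s-2,\,3s-7]$ contains a power of $3$ for every integer $s\ge 3$.

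The main obstacle, and the only step requiring genuine argument, is this last claim. Note the multiplicative width $(3s-7)/(s-2)=3-\frac1{s-2}$ is always strictly less than $3$, so a crude ratio argument does not quite suffice; one must use that the endpoints are integers. I would argue by contradiction: suppose $[s-2,3s-7]$ contains no power of $3$, and let $r$ be minimal with $3^r\ge s-2$. Minimality gives $3^{r-1}<s-2$, i.e.\ $3^r<3s-6$, while the assumption that $3^r$ lies above the interval forces $3^r>3s-7$. Combining these yields $3s-7<3^r<3s-6$, which is impossible since $3^r$ would have to be an integer strictly between the consecutive integers $3s-7$ and $3s-6$. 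Hence no such $s$ exists, condition (3) holds for all $s\ge 3$, the family $\c{C}$ is $5$-dense, and the bound follows from Theorem~\ref{thm:dense}. (As a sanity check on the small cases: $s=3$ gives the interval $[1,2]\ni 3^0$, corresponding to $C_5\in\c{C}$; $s=4,5$ give intervals containing $3^1=3$, corresponding to $C_7$; and $s=6,\dots,11$ give intervals containing $3^2=9$, corresponding to $C_{13}$.)
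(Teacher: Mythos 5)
Your proof is correct and follows the same route the paper intends: the corollary is stated as an immediate consequence of Theorem~\ref{thm:dense} with $k=5$, and your contribution is simply to spell out the verification that $\{C_{3^r+4}\}$ is $5$-dense, including the only nontrivial point that $[s-2,3s-7]$ always contains a power of $3$. The integrality argument you give for that step is valid (and the $s=3$ edge case is covered by your explicit check), so nothing is missing.
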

	
	Note that the gaps between the cycle lengths of this family $\c{C}$ grow exponentially large.  Moreover,  $\ex(n,\c{C})=\Theta(n^2)$ because $\c{C}$ consists only of odd cycles, so in theory its game saturation number could have been much larger than linear.
	
    We will prove Theorem~\ref{thm:dense} by showing that when $\c{C}$ is a $k$-dense family of cycles, either player can play in the $\c{C}$-saturation game so that the graph never contains a cycle of length at most $k$, and given this the result will essentially follow from Theorem~\ref{thm:EG}.    We will show (see Proposition~\ref{prop:opt}) that the bounds in the definition of $\c{C}$ being $k$-dense are close to best possible for such a strategy to exist.

  The rest of the paper is organized as follows.  In Section~\ref{S:basics} we establish some basic graph theory notation and facts.  In Section~\ref{S:finite} we  give a short proof of Theorem~\ref{thm:fin}.  The majority of the paper is dedicated to the proof of Theorem~\ref{thm:dense} in Section~\ref{sec:dense}. Finally, we close with some concluding remarks and open problems in Section~\ref{S:con}.

  \subsection{Preliminaries}\label{S:basics}

	Given a graph $G = (V,E)$, we say that a graph $G' = (V', E')$ is a \emph{subgraph} of $G$ if $V' \subseteq V$ and $E'\sub E$.  We say that $G'$ is an \emph{induced subgraph} of $G$ if $E' = \{uv \in E: u, v \in V'\}$.  For a set $S \subseteq V$, we define the \emph{induced subgraph on $S$} to be $G[S] := (S, E_S)$ where $E_S = \{uv\in E : u,v \in S\}$.
  We will often write $G+e$ and $G-e$ to denote the graph $G$ obtained by adding or deleting the edge $e$ respectively. Given two vertices $u,v\in V(G)$, a path from $u$ to $v$ is a \emph{$u\text{--}v$ geodesic} if the path is of length $d(u,v)$. A path that is a $u\text{--}v$ geodesic for some choice of $u$ and $v$ will just be called a \emph{geodesic}. The \emph{circumference} of a graph $G$ is the length of the longest cycle in $G$.   We say that a path is \textit{$v$-avoiding} if it does not contain the vertex $v$.
	
  Many of our proofs will rely heavily on the block structure of our underlying graph.  We recall that a \emph{block} $B$ of a graph $G$ is a maximal subgraph of $G$ which is either a $K_2$ or 2-connected, and that the edges of the blocks partition the edges of $G$. By a slight abuse of notation, we will often refer to blocks, vertex sets of blocks, and edge sets of blocks simply as ``blocks''.

  We recall some basic facts about blocks, and we refer the reader to Diestel \cite{D} for more information on blocks. Whitney's Theorem states that a graph on at least $3$ vertices is $2$-connected if and only if every pair of vertices is connected by two internally vertex-disjoint paths. Thus given a block $B$ on at least three vertices and $u, v \in B$, there exists a pair of internally disjoint paths from $u$ to $v$. A \emph{dominating vertex} in a block $B$ is a vertex adjacent to all other vertices in $B$. We say two blocks are \emph{adjacent} if their vertex sets have a nonempty intersection. Note that two blocks can overlap in at most one vertex, so two blocks being adjacent is equivalent to them sharing exactly one vertex.

    We now move onto some less standard terminology. Given a path $P=(u_1,u_2,\dots,u_\ell)$ with $\ell \ge 2$, note that there is a unique list of blocks $B_1,B_2,\dots,B_r$ and integers $1\leq s_1<s_2<\dots<s_r\leq \ell-1$ such that for all $1\leq j\leq r$ and $s_j\leq i<s_{j+1}$, the edge $u_iu_{i+1}\in B_j$. We will say $(B_1,B_2,\dots,B_r)$ is the \emph{block geodesic associated with $P$}. Note further that if $P_1$ and $P_2$ are two $u\text{--}v$ paths, then $P_1$ and $P_2$ are associated with the same block geodesic, which we will call the $ u\text{--}v $ block geodesic. Furthermore, if the $u\text{--}v$ block geodesic consists of $r$ blocks, we will say that the \emph{block distance from $u$ to $v$}, denoted $bd(u,v)$, is~$r$. In the $\c{C}$-saturation game, we let $G^{(t)}$ be the graph after the $t^{\textrm{th}}$ edge has been added in the game.  We let $G^{(0)}$ be the initial (empty) graph of the game, and we let $G^{(T)}$ denote the graph once it has become $\c{C}$-saturated.

	\section{Proof of Theorem~\ref{thm:fin}}\label{S:finite}
	
  We use the Erd\H{o}s-Gallai theorem, Theorem~\ref{thm:EG}, to give a relatively short proof of Theorem~\ref{thm:fin}.

  \begin{proof}[Proof of Theorem ~\ref{thm:fin}]
		First, we will show that we can play such that at the end of each of our turns the following hold for $G^{(t)}$:  
		\begin{itemize}
		    \item[(i)] Every block that is not a $K_2$ block contains a triangle, and 
		    \item[(ii)] There is at most one non-trivial path of $K_2$ blocks, with this path containing at most three edges. When it exists, we denote this path by $P$.
		\end{itemize}
    \begin{figure}[t]
  \begin{center}
  \includegraphics{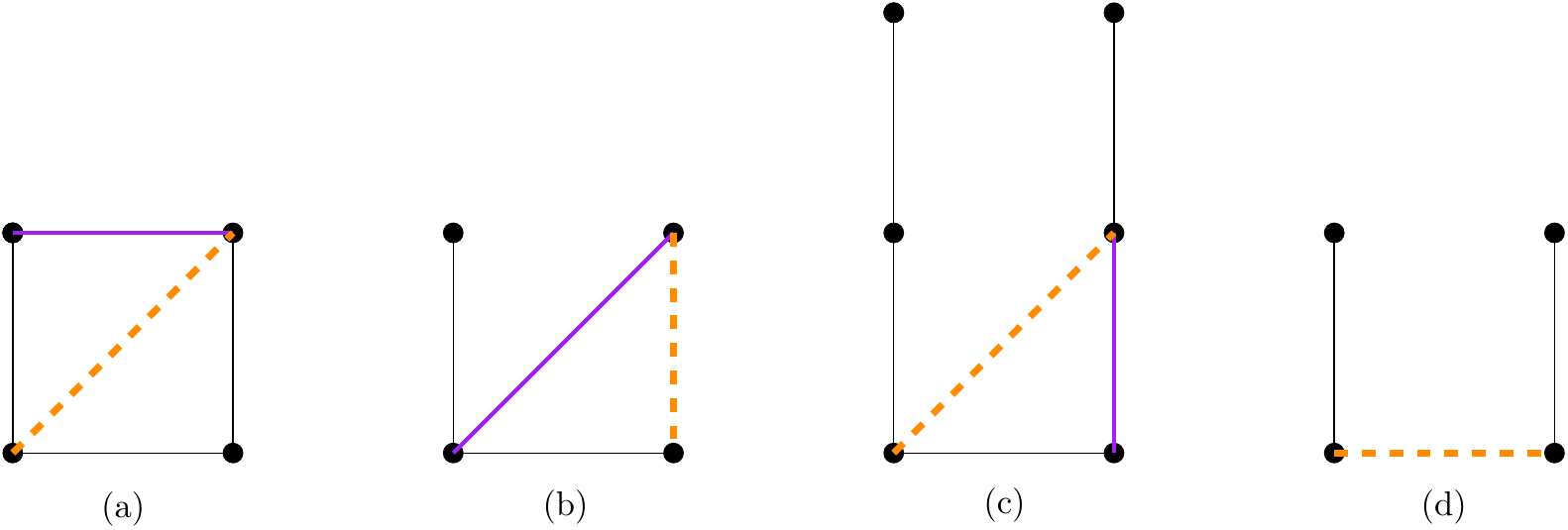}
\end{center}
  \caption{ An illustration of our moves in different situations depicted by dashed orange lines.
    The previous move of our opponent (if it is relevant) is highlighted by a thick purple line.
    Situation~(a) shows a case when property (i) is violated.
    Situations (b) and (c) show cases when  property (ii) is violated
    (in (b) due to multiple non-trivial paths of $K_2$ blocks, in (c) due to a long path of $K_2$ blocks).
    Situation (d) handles one possible case when properties (i) and (ii) are satisfied before our move.
    Note, that (d) is the only place when a path of $K_2$ blocks of length 3 is present after our move.
  }\label{fig:moves}
\end{figure}

  \noindent  We prove this claim inductively.  For the base cases, $G^{(1)}$ and $G^{(2)}$ have properties (i) and (ii), since there are not enough edges to negate the claim. 
    In the induction we describe moves that maintain properties (i) and (ii) in all possible situations; see also Figure~\ref{fig:moves}.
    Inductively assume $G^{(t)}$ has properties (i) and (ii). Then the only way a block without a triangle could form in $G^{(t+1)}$ would be for this block to consist of the new edge in $G^{(t+1)}$ and some number of $K_2$ blocks from $G^{(t)}$. Since by property (ii) there is only one non-trivial path of $K_2$ blocks, $P$, and as this has length at most $3$, the only blocks that could have formed in $G^{(t+1)}$ in this way are triangle blocks or (a unique) $C_4$ block.  If a unique $C_4$ block forms, we can add a diagonal to this block, which then contains a triangle and satisfies property (i), and it is not difficult to see that (ii) is satisfied. Thus we can maintain the desired conditions if $G^{(t+1)}$ contains a non $K_2$ block without a triangle.
	
		With this we can assume that every non $K_2$ block of $G^{(t+1)}$ contains a triangle, so property (i) is satisfied. If $G^{(t+1)}$ contains more than one non-trivial path of $K_2$ blocks, or a path of $K_2$ blocks of length $4$ or more, this must have been created by our opponent's last turn. In either case, we can choose an edge that connects our opponent's most recent edge to another edge within this $K_2$ block path. This will add a triangle and decrease the length of the longest path of $K_2$ blocks or add to the second non-trivial path of $K_2$ blocks such that it is no longer such a path. In either case, this satisfies property (ii) without violating property (i). 
		
		Next, we must handle the case that $G^{(t+1)}$ already satisfies properties (i) and (ii). If there exists a non-trivial path of $K_2$ blocks, then we can add an edge between two vertices on that path such that we create a triangle. The resulting graph satisfies both properties. Thus we may assume $G^{(t+1)}$ has no two adjacent $K_2$ blocks. Any move that does not create a forbidden cycle cannot create a path of $K_2$ blocks on more than three edges, so any move we make satisfies property (ii). Also, such a move can create at most one block $B$ which was not present in $G^{(t+1)}$. If $B$ is a $K_2$ block, then property (i) is satisfied. If the vertices of $B$ were contained in a single block in $G^{(t+1)}$, then that block must have contained a triangle, so property (i) is satisfied. Thus $B$ must contain the vertices from at least two blocks in $G^{(t+1)}$. If any of those blocks contained a triangle in $G^{(t+1)}$, then $B$ contains a triangle and property (i) is satisfied. Thus all the blocks from $G^{(t+1)}$ contained in $B$ are $K_2$ blocks. However, no two $K_2$ blocks are adjacent in $G^{(t+1)}$, and it is impossible to form a single block using only non-adjacent $K_2$ blocks and one additional edge. This completes the proof of the claim.

		Assume we play according to this strategy throughout the game, which implies that every block of $G^{(T)}$ is a $K_2$ block or contains a triangle.  We claim that this implies that $G^{(T)}$ is $\mathcal{C}_{\geq 3k}$-free. If some block $B$ of $G^{(T)}$ contained a cycle $C$ of length $\ell\geq 3k$ and a triangle $xyz$, then we will show that this block also contains a cycle of length $\ell'$ for some $\ell'\ge  \ell/3 +2$ which contains the three vertices in the triangle as consecutive vertices.   This implies that $G^{(T)}$ also  contains a cycle of length $\ell'-1$, which contradicts $G^{(T)}$ being $\c{C}$-free.
		
		Suppose the triangle $xyz$ intersects $C$ in two or three vertices, not necessarily sequentially along $C$. This yields two or three paths along $C$ between each of the vertices of $x,y,z$ that it intersects. One of these paths must have length at least $\frac{\ell}3$, call it $P_1$ with endpoints say $x,y$, without loss of generality. Then the cycle formed by $P_1$ and the edges $xz$, $zy$ give the desired cycle.

		Next, suppose that $C$ and $xyz$ have exactly one vertex in common, say $x$ without loss of generality. Since $B$ is 2-connected, there exists some shortest path $Q$ in $B-x$ from a vertex in $\{y,z\}$ to a vertex of $C-x$.  Since $Q$ is a shortest path, it contains at most one vertex from $\{y,z\}$ and $C-x$, say these vertices are $y,v$, which must be the endpoints of $Q$.  Then $x$ and $v$ split $C$ into two paths, one of which will have length at least $\frac{\ell}2\geq \frac{\ell}3$.  This long path, together with $Q$ and the edges $yz$ and $zx$, gives the desired cycle. 
		
		Finally, suppose that $C$ and $xyz$ are disjoint. Then since $B$ is $2$-connected, we can find two paths, disjoint from each other and internally-disjoint with both $xyz$ and $C$, connecting distinct vertices of the triangle to distinct vertices of the cycle. This again partitions $C$ into two paths, one of which will have length at least $\frac{\ell}3$, and using this together with the two paths from $xyz$ and the third vertex of $xyz$ gives the desired cycle.
		
		Thus $G^{(T)}$ is $\c{C}_{\geq 3k}$-free. Using Theorem~\ref{thm:EG} gives
		
		\[
		|E(G^{(T)})|\leq \frac{(3k-1)(n-1)}{2}.    \qedhere
		\]
	\end{proof}

\section{$k$-Dense Families of Cycles}\label{sec:dense}

In this section, we consider the cycle saturation game for families of cycles with sufficiently small of gaps between consecutive forbidden cycle lengths.

Let us briefly outline the strategy we use in the $\c{C}$-saturation game when $\c{C}$ is a $k$-dense family of cycles.  Ideally we would like to play such that at the end of each of our turns, $G^{(t)}$ is a connected graph where every block $B$ is dominated, i.e. $B$ contains a dominating vertex $r_B$, and moreover $r_B$ is the unique vertex of $B$ which is closest to some special vertex $h$.  Assuming this holds, if our opponent connects two vertices $u$ and $v$ at blocks distance $s$ from each other, then the path through their dominating vertices creates a cycle of length $s+1$.  We further require that each dominating vertex $r_B$ be the endpoint of a path of length $k-3$ in another block, which will allow us to extend this path through dominating vertices in such a way that we can actually get any cycle length between $s+1$ and $3+(k-2)(s-2)$, with at least one of these lengths forbidden by assumption of $\c{C}$ being $k$-dense.   Thus distant blocks can not be connected to one another.  This will imply that $G^{(t)}$ has small circumference, and hence relatively few edges.

It remains to describe how we can maintain this ideal structure (or at least something close to it).  If our opponent ever connects an isolated vertex $x$ to one of these blocks $B$, then we can try to make $x$ adjacent to the dominating vertex of $B$.  If this is impossible then it will turn out that $x$'s neighbor $y$ dominates the $xy$ block and $y$ is the end of a long path in $B$, so this new block maintains the desired properties.  The issue will be when our opponent connects two isolated vertices.  To maintain connectedness we are forced to make one of these vertices adjacent to $h$. By doing this repeatedly the opponent can create some non-desirable structures, but we can at least maintain that any that do appear are incident to $h$.

\subsection{$k$-Fantastic Graphs}
We now move onto our formal definitions. Throughout the remainder of this section, we will assume we are playing the $\mathcal{C}$-saturation game for a $k$-dense family of cycles $\mathcal{C}$. To define the structure that we wish to maintain in this game, we work with a graph $G$ with a specified vertex $h$.  We say a block $B$ in $G$ is \emph{rooted} if there exists a vertex $r_B$ that is a dominating vertex in $B$, and for which $d(r_B,h)<d(u,h)$ for all $u\in B - r_B$. When $B$ is rooted, we say $r_B$ is the \emph{root} of $B$ and that $r_B$ \emph{roots} the block $B$. Note that if $h\in B$ and $B$ is rooted, then $h$ is the root. Furthermore, note that if every block in $G$ has a root, then every vertex $v\in V(G)\backslash \{h\}$ is in exactly one block for which it is not the root, namely the first block in the $v\text{--}h$ block geodesic. If $v\neq h$ is a vertex that roots every block containing it except for one block $B$, then we will call this block the \emph{stem of $v$} and denote it by $B_v$.

We say a vertex $v$ in a rooted block $B$ is \emph{finished} if $v$ is the endpoint of a path of length at least $k-3$ in the induced subgraph $B-r_B$.  We say the block $B$ is \emph{finished} if all vertices in $B-r_B$ are finished. If a vertex or block is not finished, we call it \emph{unfinished}.
We say a block $B$ is \emph{nearly $h$-dominated} if $B$ contains $h$, and all but one vertex in $B$ is adjacent to $h$. In this case we define $r_B:=h$ (even though $h$ is technically not a root of the block). 

\begin{definition}\label{def:umbrella}
Two blocks $B_1$ and $B_2$ are together called an \emph{$h$-umbrella} if the following hold:

\begin{enumerate}
    \item The vertex $h$ dominates $B_1$,
    \item The block $B_2$ is a $K_2$ block,
    \item The blocks $B_1$ and $B_2$ intersect at a vertex $u\neq h$,
    \item For any other block $B$, if $B$ intersects $B_1$, then they intersect at $h$,
    \item The vertex in $B_2\sm B_1$ has degree 1, and we will refer to this vertex as the handle of the $h$-umbrella.
\end{enumerate}
We say that an $h$-umbrella is finished if the unique neighbor of the handle is finished in $B_1$, and unfinished otherwise.
\end{definition}

See Figure~\ref{fig:umbrella} for an illustration of an $h$-umbrella.  Note that if $B_1$ and $B_2$ constitute an $h$-umbrella, then $r_{B_1}=h$ and $r_{B_2}=u$.

We are now ready to state the definition of a $k$-fantastic graph, which is the main structural tool we will need in this section.

\begin{figure}
  \begin{center}
  \includegraphics{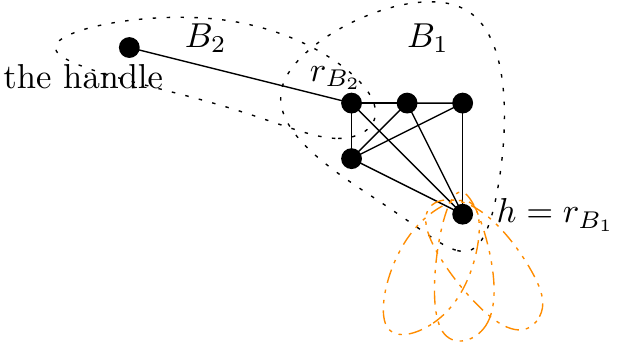}
\end{center}
  \caption{ An illustration of Definition~\ref{def:umbrella}. 
    The $h$-umbrella is in black.
    The vertex $r_{B_2}$ is the unique neighbor of the handle.
    Other blocks that intersects $B_1$ are depicted in orange.
  }\label{fig:umbrella}
\end{figure}

\begin{definition}\label{kfan}

Given a graph $G$ with specified vertex $h$, let $H$ be the subgraph induced by the set of vertices which are contained in a block which is nearly $h$-dominated or part of an unfinished $h$-umbrella, and if no such blocks exist set $H=\{h\}$. Let $F$ be the subgraph induced by the set consisting of $h$ and  all of the vertices contained in the  blocks of $G$ not contained in $H$.
We say $G$ is \emph{$k$-fantastic} whenever $F$ has the following properties: 

\begin{enumerate}[leftmargin=1cm]
	\item[{\normalfont Property 1.}] $F$ is connected,\label{property connected}
	\item[{\normalfont Property 2.}] Every block in $F$ is rooted,\label{property rooted}
	\item[{\normalfont Property 3.}] If a vertex $v\neq h$ is the root of some block in $F$, then $v$ is finished in its stem $B_v$. \label{property stem finished}
	\item[{\normalfont Property 4.}] Each vertex $v\neq h$ roots at most one unfinished block in $F$,\label{property one unfinished}
	\item[{\normalfont Property 5.}] The vertex $h$ is adjacent to at most one vertex of degree $1$ in $F$. \label{property one leaf}
	
\end{enumerate}
\end{definition}

We note that the graph $G$ consists of $H\cup F$ along with a set of isolated vertices. Note that in a slight abuse of notation, we will sometimes use $H$ and $F$ to refer to $V(H)$ and $V(F)$, respectively.

\subsection{Preliminary work towards $k$-Dense results}
We will ultimately show that for any $k$-dense family of cycles $\mathcal{C}$, either player can play the $\mathcal{C}$-saturation game such that the graph $G^{(t)}$ is $k$-fantastic at the end of their turns. To this end, throughout this subsection we let $G$ refer to a $k$-fantastic graph, and we consider $h$, $H$, $F$, and $I$ as defined above.  We also let $G'$ refer to any $\c{C}$-free graph with specified vertex $h$, which will usually be thought of as $G+uv$ for some edge $uv$.

\begin{lemma}\label{lemma short avoiding path}
	Let $G'$ be a $\mathcal{C}$-free graph with specified vertex $h$.
  If $B$ is a block of $G'$ that is either rooted or nearly $h$-dominated, then the longest $r_B$-avoiding path in $B$ is of length at most $k-3$.
\end{lemma}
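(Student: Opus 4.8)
The plan is to argue by contradiction: let $P=(u_0,u_1,\dots,u_\ell)$ be a longest $r_B$-avoiding path in $B$ and suppose for contradiction that $\ell\ge k-2$. I will produce a copy of $C_k$ inside $B$, which is forbidden since $C_k\in\mathcal C$ for every $k\ge 5$ (including the special case $k=5$) and $G'$ is $\mathcal C$-free. If $B=K_2$ then $\ell\le 1\le k-3$ trivially, so I may assume $B$ is $2$-connected. The whole difficulty is that $r_B$ need not be adjacent to every vertex of $P$: in the rooted case $r_B$ dominates $B$ and there is nothing to worry about, but in the nearly $h$-dominated case $r_B=h$ may fail to be adjacent to exactly one vertex $w$.

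First I would arrange that the two endpoints of $P$ are adjacent to $r_B$. In the rooted case every vertex of $B-r_B$, and in particular $u_0$ and $u_\ell$, is adjacent to $r_B$. In the nearly $h$-dominated case the only possible obstruction is $w$, and I claim $P$ can be rerouted so that $w$ is not an endpoint. Indeed, if say $u_0=w$, then since $B$ is $2$-connected $w$ has a neighbour $x\ne u_1$; as $w$ is not adjacent to $h$ we have $x\ne h=r_B$. If $x\notin V(P)$, prepending $x$ gives a longer $r_B$-avoiding path, contradicting the maximality of $P$; hence $x=u_m$ for some $m\ge 2$, and then
\[
P'=(u_\ell,u_{\ell-1},\dots,u_m,\,u_0,\,u_1,\dots,u_{m-1})
\]
is an $r_B$-avoiding path of the same length $\ell$ whose endpoints $u_\ell$ and $u_{m-1}$ both differ from $w=u_0$. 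Replacing $P$ by $P'$, I may thus assume both endpoints of $P$ are adjacent to $r_B$ and that $w\notin\{u_0,u_\ell\}$.

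With the endpoints under control I would close up a subpath of length exactly $k-2$ through $r_B$. If $u_{k-2}\ne w$, then $u_0,\dots,u_{k-2}$ together with the edges $r_Bu_0$ and $r_Bu_{k-2}$ form a copy of $C_k$. If instead $u_{k-2}=w$, then because $w\ne u_\ell$ we have $k-2<\ell$, so $u_{k-1}$ exists, and since $k\ge 5$ both $u_1$ and $u_{k-1}$ differ from $w=u_{k-2}$; hence $r_B,u_1,u_2,\dots,u_{k-1}$ is a copy of $C_k$. In either case we contradict $\mathcal C$-freeness, and therefore $\ell\le k-3$.

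The step I expect to be the crux is the rerouting in the nearly $h$-dominated case: the single ``bad'' vertex $w$ is exactly what prevents one from naively closing the path, and the argument must combine $2$-connectivity (to guarantee a second neighbour of $w$) with the maximality of $P$ (to force that neighbour onto $P$, producing the chord that swings $w$ into the interior). Once $w$ is interior, the two-candidate-subpath argument of the previous paragraph needs only the single forbidden length $C_k$, so no appeal to $C_{k+1}$ is required and the case $k=5$ needs no separate treatment.
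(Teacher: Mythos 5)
Your proof is correct and follows essentially the same strategy as the paper's: assume a long $r_B$-avoiding path, close a segment of it through $r_B$ to obtain a forbidden $C_k$, and in the nearly $h$-dominated case use $2$-connectivity to obtain a second neighbour of the single vertex $w$ not adjacent to $h=r_B$. The only difference is in execution --- the paper builds the $C_k$ directly from that second neighbour via a two-case chord analysis on a path of length exactly $k-2$, whereas you first rotate $w$ into the interior of a maximum path (using maximality to force the neighbour onto the path) and then slide a window of $k-1$ consecutive vertices past it; both versions are valid.
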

\begin{proof}
	Assume to the contrary that $B$ contains an $r_B$-avoiding path $(v_0,v_1,\dots,v_{k-2})$. If the edges $v_0r_B$ and $v_{k-2}r_B$ are in $E(G')$, then $(r_B,v_0,v_1,\dots,v_{k-2},r_B)$ is a cycle of length $k$, which contradicts the fact that $G'$ is $\mathcal{C}$-free. This implies that $B$ must be nearly $h$-dominated, $r_B=h$, and exactly one of the edges $v_0h$ or $v_{k-2}h$ are not in $E(G')$. We assume without loss of generality $v_{k-2}h\not\in E(G')$. Since $B$ is non-trivial and $2$-connected, $v_{k-2}$ must have at least two neighbors in $B$, and specifically at least one neighbor $v$ with $v\neq v_{k-3}$. If $v\neq v_i$ for any $1\leq i\leq k-4$, then $(h,v_1,v_2\dots,v_{k-2},v,h)$ is a cycle of length $k$, a contradiction. If $v=v_i$ for some $1\leq i\leq k-4$, then $(h,v_{i+1},v_{i+2},\dots,v_{k-2},v_i,v_{i-1},\dots,v_0,h)$ is a cycle of length $k$ in $G'$, a contradiction. We conclude that for all rooted and nearly $h$-dominated blocks $B$, we have that $B$ does not contain an $r_B$-avoiding path of length $k-2$.
\end{proof}

\begin{lemma}\label{lemma no long cycles}
  The circumference of any $k$-fantastic graph is at most $k-1$.
\end{lemma}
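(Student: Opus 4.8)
The plan is to localize the problem to a single block and then feed each block into Lemma~\ref{lemma short avoiding path}. The first step is the standard observation that every cycle is $2$-connected and therefore lies inside a single block of $G$; consequently the circumference of $G$ equals the maximum length of a cycle contained in one of its blocks. Thus it suffices to show that each block of $G$ has circumference at most $k-1$. Here I would also record that the $k$-fantastic graphs under consideration are $\c{C}$-free (they arise as game states in the $\c{C}$-saturation game), since this is exactly the hypothesis needed to invoke Lemma~\ref{lemma short avoiding path}.

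Next I would classify the blocks. Writing $G$ as $H\cup F$ together with its isolated vertices, every block of $G$ is either a block of $F$ or a block of $H$. Every block of $F$ is rooted by Property~2 of the definition of a $k$-fantastic graph. Every block of $H$ is either nearly $h$-dominated, the block $B_1$ of an $h$-umbrella (which is dominated by $h$ and hence rooted with $r_{B_1}=h$), or the block $B_2$ of an $h$-umbrella (a $K_2$ block, which is rooted with $r_{B_2}=u$ and in any case contains no cycle). The upshot is that every block $B$ of $G$ comes equipped with a distinguished vertex $r_B$ to which Lemma~\ref{lemma short avoiding path} applies, so the longest $r_B$-avoiding path in $B$ has length at most $k-3$.

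The core computation is then a two-case argument inside a single such block $B$. Let $C$ be a cycle of length $\ell$ in $B$. If $C$ passes through $r_B$, then deleting $r_B$ turns $C$ into an $r_B$-avoiding path with $\ell-2$ edges, so $\ell-2\le k-3$ and hence $\ell\le k-1$. If instead $C$ avoids $r_B$, then deleting any single edge of $C$ leaves an $r_B$-avoiding path with $\ell-1$ edges, so $\ell-1\le k-3$ and $\ell\le k-2\le k-1$. In either case $\ell\le k-1$, and taking the maximum over all blocks gives the claimed bound on the circumference of $G$. I expect the only real friction to be bookkeeping rather than mathematics: one must check that the block classification is genuinely exhaustive (in particular that the two umbrella blocks and the nearly $h$-dominated blocks are all accounted for and each non-$K_2$ block really does carry a root), and one must keep the $\c{C}$-freeness hypothesis explicitly in force, as it is precisely what makes Lemma~\ref{lemma short avoiding path}, and hence the whole argument, valid.
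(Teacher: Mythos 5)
Your proof is correct and follows essentially the same route as the paper: reduce to a single block via $2$-connectivity, observe that every block of a $k$-fantastic graph is rooted or nearly $h$-dominated, and apply Lemma~\ref{lemma short avoiding path} to bound the cycle length. Your explicit two-case split (cycle through $r_B$ versus avoiding $r_B$) and your remark that $\mathcal{C}$-freeness must be carried along are just slightly more careful bookkeeping of what the paper does in one line.
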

\begin{proof}
  Let $G$ be a $k$-fantastic graph, and note that every block of a $k$-fantastic graph is either rooted or nearly $h$-dominated. If $G$ had a cycle of length $\ell\geq k$, then this cycle would have to be in some block $B$ because cycles are $2$-connected and blocks are maximal $2$-connected subgraphs. Thus, $B$ would necessarily contain an $r_B$-avoiding path of length $\ell-2> k-3$, a contradiction with Lemma~\ref{lemma short avoiding path}. So, the circumference of $G$ is at most $k-1$.
\end{proof}

We continue our discussion on the structural properties of $k$-fantastic graphs with the following two lemmas regarding which path lengths are attainable between specific vertices.

\begin{lemma}\label{lemma paths of every length from roots}
Let $G$ be $k$-fantastic. If a vertex $v$ is finished in a rooted block $B$ and $u \in B$, then for each $\ell$ with $d(u,v)\leq \ell\leq k-2$ there is a $u\text{--}v$ path of length $\ell$ in $B$.
\end{lemma}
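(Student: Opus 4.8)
The plan is to exploit the two features that make $B$ special: its root $r_B$ is adjacent to every other vertex of $B$, and the finished vertex $v$ is the endpoint of a long path avoiding $r_B$. Since $v$ is finished, I would first fix a path $P=(v=p_0,p_1,\dots,p_{k-3})$ of length exactly $k-3$ contained in $B-r_B$ (truncating a longer witnessing path if necessary). Because $r_B$ dominates $B$, any two distinct vertices of $B$ are joined either by an edge or by a path of length $2$ through $r_B$, so $d(u,v)\in\{1,2\}$; we may clearly assume $u\neq v$. When $d(u,v)=1$ the edge $uv$ itself is the required path of length $1$, so it suffices to construct a $u\text{--}v$ path of every length $\ell$ with $2\le\ell\le k-2$.

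First I would dispose of the two easy cases. If $u=r_B$, then for $2\le\ell\le k-2$ the sequence $(r_B,p_{\ell-1},p_{\ell-2},\dots,p_0)$ is a path of length $\ell$, using the edge $r_Bp_{\ell-1}$ (which exists since $r_B$ dominates $B$) and $\ell-1\le k-3$ edges along $P$. If instead $u\neq r_B$ and $u$ does not lie on $P$, then for $2\le\ell\le k-2$ the sequence $(u,r_B,p_{\ell-2},p_{\ell-3},\dots,p_0)$ is a path of length $\ell$; it is simple precisely because neither $u$ nor $r_B$ lies on $P$.

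The remaining and genuinely delicate case is when $u$ lies on the finished path itself, say $u=p_i$ with $1\le i\le k-3$. Here the naive route through $r_B$ can revisit $u$, so I would instead build paths by combining a forward stretch along $P$ with a jump back through $r_B$: for $i\le m\le k-3$ and $0\le j\le i-1$, the sequence $(p_i,p_{i+1},\dots,p_m,r_B,p_j,p_{j-1},\dots,p_0)$ is a genuine path, since the forward indices are all $\ge i$ while the backward indices are all $\le i-1$, so no vertex repeats, and it has length $(m-i)+2+j$. It then remains to check that these lengths cover all of $\{2,\dots,k-2\}$, for which two subfamilies suffice: taking $m=i$ with $0\le j\le i-1$ yields every length in $\{2,\dots,i+1\}$, while taking $j=i-1$ with $i\le m\le k-3$ yields every length in $\{i+1,\dots,k-2\}$, and their union is exactly $\{2,\dots,k-2\}$. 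This last case is the main obstacle: one must ensure the forward-then-back paths stay simple and that the two subfamilies overlap (at length $i+1$) so that no target length is missed, which is what forces the precise index bookkeeping above. Combining all cases produces a $u\text{--}v$ path of every length from $d(u,v)$ to $k-2$, as claimed.
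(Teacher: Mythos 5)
Your proof is correct and follows essentially the same route as the paper's: both fix the $r_B$-avoiding path of length $k-3$ witnessing that $v$ is finished and route through the dominating root $r_B$, splitting into the cases $u=r_B$, $u$ off the path, and $u$ on the path. Your two-parameter family $(m,j)$ in the last case is just a reparametrization of the paper's explicit path of each length $\ell$, and your index bookkeeping checks out.
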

\begin{proof}

  If $\ell=1$ the result is immediate, so assume $\ell\geq 2$. Since $v$ is finished there exists an $r_B$-avoiding path $(v,v_1,\dots,v_{k-3})$ in $B$. If $u=r_B$ then the path $(v,v_1,\ldots,v_{\ell-1},r_B)$ works.  If $u\ne r_B$ and $u\ne v_i$ for any $i<\ell$, then $(v,v_1,\dots,v_{\ell-2},r_B,u)$ is a path of length $\ell$.  If $u=v_i$ with $2\leq i<\ell$, then we use  $(v,v_1,\dots,v_{i-1},r_B,v_{\ell-1},v_{\ell-2},\dots,v_{i+1},u)$. In each case we find a path of length $\ell$, thus completing the proof.
\end{proof}

 Recall that we denote the block distance from vertex $u$ to vertex $v$ as $bd(u,v)$, a vertex $v$ can only have one stem, and that $v$ roots all other blocks containing it. 

\begin{lemma}\label{lemma path lengths between F and H}
	Let $G$ be $k$-fantastic, and let $u,v\in F$ with $u\ne h$ be such that $s:=bd(u,v) \geq 2$. Then there is a $u\text{--}v$ path of length $\ell$ in $G$ for every $\ell$ with \[s+1\leq \ell \leq 2+(k-2)(s-2).\]
	Moreover, if $v=h$ we can do this for all $\ell$ with
	\[s\leq \ell \leq 1+(k-2)(s-1).\]
\end{lemma}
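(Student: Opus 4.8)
The plan is to build the required $u\text{--}v$ paths by stitching together paths across the individual blocks of the $u\text{--}v$ block geodesic, using a \emph{finished} cut vertex inside each interior block to make the length spent in that block flexible. Write the $u\text{--}v$ block geodesic as $(B_1,\dots,B_s)$ and let $c_0=u,c_1,\dots,c_{s-1},c_s=v$ be its cut vertices, so $B_j$ contains $c_{j-1}$ and $c_j$. Any choice of a $c_{j-1}\text{--}c_j$ path inside each $B_j$ concatenates to a genuine $u\text{--}v$ path, since consecutive blocks meet only in the single cut vertex $c_j$ and non-consecutive blocks are disjoint; its length is the sum of the per-block lengths. So it suffices to determine, for each $j$, which lengths are realizable by a $c_{j-1}\text{--}c_j$ path inside $B_j$, and then to take a Minkowski sum of these length sets.

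The key step will be the claim that for every interior index $2\le j\le s-1$, at least one of $c_{j-1},c_j$ is finished in $B_j$. To prove it I would argue that the root $r_{B_j}$ is a single vertex, so at least one of $c_{j-1},c_j$ is a non-root of $B_j$; since $h$ roots every block containing it, such a non-root vertex is not $h$, and $B_j$ must be its stem. If $c_{j-1}$ is this non-root vertex, then (as $j\ge 2$) it roots the neighbouring geodesic block $B_{j-1}\sub F$, so Property~3 gives that $c_{j-1}$ is finished in its stem $B_j$; symmetrically, if $c_j$ is the non-root vertex, then (as $j\le s-1$) it roots $B_{j+1}$ and is finished in $B_j$. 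With a finished cut vertex in hand, Lemma~\ref{lemma paths of every length from roots} applied inside $B_j$ produces $c_{j-1}\text{--}c_j$ paths of every length from $d(c_{j-1},c_j)$ up to $k-2$, and since $r_{B_j}$ dominates $B_j$ we always have $d(c_{j-1},c_j)\le 2$.

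To finish the general case I would assemble the Minkowski sum. Each interior block $B_j$ contributes any length in $\{d(c_{j-1},c_j),\dots,k-2\}$ with $d(c_{j-1},c_j)\in\{1,2\}$, while the two end blocks $B_1,B_s$ contribute at least their achievable geodesic length, which is $\ge 1$. Moreover $c_{j-1}$ and $c_j$ are adjacent whenever one of them is the root of $B_j$, so the value $d(c_{j-1},c_j)=2$ can occur for at most one block, namely the unique block of the geodesic in which the path comes closest to $h$ (equivalently, the unique block in which neither geodesic cut vertex is the root). Taking the $s-2$ guaranteed-flexible interior blocks at their maximum $k-2$ and the end blocks at length $1$ realizes the top value $2+(k-2)(s-2)$; as the per-block lengths shrink, the sumset of these contiguous integer intervals is again a contiguous interval, whose bottom is $s+1$ in the worst case (the single ``turning'' block forced to contribute $2$ together with two rigid end blocks). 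Any boundary configuration in which the turning block is itself an end block only enlarges the achievable set, so in all cases one obtains a $u\text{--}v$ path of every length $\ell$ with $s+1\le\ell\le 2+(k-2)(s-2)$.

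For the case $v=h$ the geodesic never turns in its interior: the vertex closest to $h$ is $h=c_s$ itself, so every pair of consecutive cut vertices is adjacent, and in addition the last block $B_s$ becomes flexible, since its root is $h=c_s$ and its other cut vertex $c_{s-1}$ roots $B_{s-1}$ and is therefore finished in $B_s$ by Property~3. Thus all blocks except $B_1$ are flexible with $d(c_{j-1},c_j)=1$, and the same Minkowski-sum bookkeeping yields paths of every length $\ell$ with $s\le \ell\le 1+(k-2)(s-1)$. The main obstacle throughout is precisely this turning-block and end-block bookkeeping: correctly identifying which cut vertex roots each block along the geodesic, and tracking the single block where the two geodesic cut vertices may be non-adjacent, which is exactly what forces the lower endpoint to be $s+1$ rather than $s$ in the general statement.
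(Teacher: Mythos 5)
Your proposal is correct and follows essentially the same route as the paper: decompose along the $u\text{--}v$ block geodesic, use Property~3 together with Lemma~\ref{lemma paths of every length from roots} to make each interior block contribute a full interval of lengths $[d(c_{j-1},c_j),k-2]$, and take the (contiguous) sum of these intervals. The only real divergence is in verifying the lower endpoint $s+1$: you argue via the block--cut tree that at most one block of the geodesic can have both of its geodesic cut vertices non-adjacent (the unique ``turning'' block), whereas the paper gets $d(u,v)\le s+1$ more directly from the path $(u,r_{B_1},\ldots,r_{B_s},v)$; both arguments are valid.
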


\begin{proof}
Let $(B_1,\dots,B_s)$ be the $u\text{--}v$ block geodesic, and observe that these blocks are all rooted since $u,v\in F$.  For $1\leq i\leq s-1$, let $v_i$ be the vertex in both $B_i$ and $B_{i+1}$, and note that e.g. $u\ne v_1$ as otherwise $(B_2,\ldots,B_s)$ would be a shorter path of blocks from $u$ to $v$.  	We will first prove, regardless of if $v=h$ or $v\neq h$, that for all $d(v_i,v_{i+1})\le k_i\le k-2$ there is a $u\text{--}v$ path of length \[d(u,v_1)+\sum_{i=2}^{s-1} k_i+d(v_{s-1},v),\]
and moreover the path ends in a $v_{s-1}\text{--}v$ geodesic.
	
Indeed, starting from $u$ we transverse a $u\text{--}v_1$ geodesic.  If $v_1\ne r_{B_2}$, then by definition $B_{v_1}=B_2$, i.e. $B_2$ is the unique block containing $v_1$ which $v_1$ is not the root of.  The uniqueness of this block implies $v_1=r_{B_1}$, and by Property~\ref{property stem finished} this means that $v_1$ is finished in $B_2$.  Thus we can apply Lemma~\ref{lemma paths of every length from roots} to find a path of length $k_2$ from $v_1$ to $v_2$ in $B_2$.  If $v_1=r_{B_2}$, then $v_2\ne r_{B_2}$ and a symmetric argument gives the same conclusion.  Continuing in this manner, Lemma \ref{lemma paths of every length from roots} gives for each $3\leq i\leq s-1$ a path of length $k_i$ from $v_{i-1}$ to $v_{i}$ in $B_i$, and once we reach $v_{s-1}$ we can traverse a $v_{s-1}\text{--}v$ geodesic to complete the path.  This proves the claim.
	
By choosing the appropriate values for the $k_i$'s, we can find a $u\text{--}v$ path of length $\ell$ for every $\ell$ with \[d(u,v)\leq \ell \leq d(u,v_1)+d(v_{s-1},v)+(k-2)(s-2).\]  To finish the proof for the $v\ne h$ case, it suffices to show $d(u,v_1)+d(v_{s-1},v)\geq 2$ and $d(u,v)\leq s+1$. The first part is immediate, and the second part follows by considering the path $(u,r_{B_1},\ldots,r_{B_s},v)$ after deleting duplicated vertices if, say, $r_{B_i}=r_{B_{i+1}}$ for some $i$.

In the case where $v=h$, we can again apply the claim to find $u\text{--}h$ paths of length $\ell$ for all $\ell$ with $d(u,h)\leq \ell \leq d(u,v_1)+d(v_{s-1},h)+(k-2)(s-2)$ and which end in the edge $v_{s-1}h$. Note that $v_{s-1}$  does not root $B_s$ (since $h\in B_s$), and thus it must root $B_{s-1}$ so it will be be finished in $B_s$. By Lemma \ref{lemma paths of every length from roots} we can replace the edge $v_{s-1}h$ with a path of length $k_s$ for any $1\leq k_s\leq k-2$, thus allowing us to build a $u\text{--}h$ path of every length $\ell$ with $d(u,h)\leq \ell \leq d(u,v_1)+(k-2)(s-1)$. Since $d(u,h)=s$ and $d(u,v_1)\geq 1$, the result follows.

\end{proof}

We show that we can always find long paths between pairs of vertices in adjacent blocks, except for one exceptional case.

\begin{lemma}\label{lemma block distance 2 path of length k-1}
	Let $G$ be $k$-fantastic. Let $u,v\in F$ with $bd(u,v)=2$, and let $(B_1,B_2)$ be the $u\text{--}v$ block geodesic. Then there is a $u\text{--}v$ path of length $k-1$ in $G$, unless $h$ roots $B_1$ and $B_2$ and both $u$ and $v$ are unfinished.
\end{lemma}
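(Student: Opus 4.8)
The plan is to route the required path through the unique cut vertex $w$ shared by $B_1$ and $B_2$, spending length $k-2$ inside one of the two blocks and a single edge inside the other. Since $u,v\in F$, the blocks $B_1,B_2$ on the $u$--$v$ geodesic are rooted (Property~\ref{property rooted}), and both contain $w$. My first step is to pin down which of $B_1,B_2$ the vertex $w$ roots. Recall that every vertex other than $h$ fails to be the root of exactly one block (its stem); hence $w$ can be a non-root of at most one of $B_1,B_2$, so $w$ roots at least one of them, and if $w=h$ it roots both (a rooted block containing $h$ is rooted by $h$). This yields a clean dichotomy: either $w$ roots exactly one of $B_1,B_2$, or $w$ roots both.

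In every non-exceptional branch the path is assembled in the same way. The workhorse is Lemma~\ref{lemma paths of every length from roots}: whenever one of $u,v,w$ is finished in a rooted block $B$ containing a second of these vertices, it yields a path between them of every length from their $B$-distance up to $k-2$. Any two vertices of a rooted block lie at distance at most $2$ (through the dominating root), and $k\ge 5$, so the length $k-2$ is always attainable. I then form the full $u$--$v$ path as the concatenation at $w$ of such a length-$(k-2)$ path inside one block with the edge joining $w$ to the remaining endpoint in the other block; the latter edge exists because a root dominates its block. As $B_1\cap B_2=\{w\}$, this concatenation is a simple path, of length exactly $(k-2)+1=k-1$.

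The case analysis is precisely the task of locating a finished vertex in the correct block. If $w$ roots exactly one block, say $B_2$, then $w\ne h$ and $B_1$ is the stem of $w$, so $w$ is finished in $B_1$ by Property~\ref{property stem finished}; I use the length-$(k-2)$ path from $u$ to $w$ in $B_1$ together with the edge $wv$ in $B_2$. If $w$ roots both blocks and $w\ne h$, then Property~\ref{property one unfinished} forbids $w$ from rooting two unfinished blocks, so at least one of $B_1,B_2$ is finished; its outer endpoint (in $\{u,v\}$) is then finished in its block and the same concatenation applies. The only remaining branch is $w=h$ rooting both $B_1$ and $B_2$: if at least one of $u,v$ is finished the construction again succeeds, and if both are unfinished we are exactly in the stated exception. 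A symmetric relabelling of $(u,B_1)\leftrightarrow(v,B_2)$ covers the mirror situations, including the possibility $v=h$.

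The main obstacle is to confirm that the exception is exactly the configuration in which $h$ roots both $B_1$ and $B_2$ and both $u,v$ are unfinished, and nothing larger---that is, that Properties~\ref{property stem finished} and~\ref{property one unfinished} genuinely force a finished vertex in every other configuration. The delicate point is that Property~\ref{property one unfinished} constrains only non-$h$ roots, which is precisely why the case $w=h$ behaves differently: $h$ may root arbitrarily many unfinished blocks, so when $w=h$ nothing prevents both $u$ and $v$ from being unfinished. Establishing that $w$ roots at least one block (so a finished vertex is forced through a stem) and splitting carefully on whether $w=h$ are where all the care lies; once those are settled, the path construction is routine.
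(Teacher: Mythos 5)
Your proposal is correct and follows essentially the same route as the paper's proof: identify the cut vertex $w=B_1\cap B_2$, note it roots at least one block, and split on whether it roots one or both, using Property~3 (stem finished) in the first case and Property~4 plus the $w=h$ exception in the second, with Lemma~\ref{lemma paths of every length from roots} supplying the length-$(k-2)$ segment that is extended by one edge through the dominating root. No gaps; the argument matches the paper's.
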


\begin{proof}
  Let $x\in B_1\cap B_2$.  Note that $G$ being $k$-fantastic implies that $x$ roots at least one of these blocks, and that $x\ne u,v$ due to $u$ and $v$ having block distance $2$.  First consider the case that $x$ roots both $B_1$ and $B_2$.  We claim that either $u$ or $v$ are finished. If $x=h$, then this is true by assumption, and if $x\neq h$, then one of the blocks $B_1$ or $B_2$ must be finished by Property \ref{property one unfinished}, so one of $u$ or $v$ must be finished, and we assume without loss of generality that $u$ is finished in $B_1$. Then by Lemma \ref{lemma paths of every length from roots}, there is a $u\text{--}x$ path of length $k-2$ which can be extended using the edge $xv$ to a path of length $k-1$ that ends at $v$, so we are done in this case.
 
Now assume $x$ roots only one of the blocks, say without loss of generality $B_2$. Since $B_1$ is the stem of $x$, $x$ is finished in $B_1$ by Property \ref{property stem finished}, so by Lemma \ref{lemma paths of every length from roots} there is a path of length $k-2$ from $u$ to $x$, which can be extended to a $u\text{--}v$ path of length $k-1$ using the edge $xv$. Thus in all cases we are done.
\end{proof}

The next two lemmas will help us in situations in which our opponent plays an edge that is incident with a nearly $h$-dominated block.

\begin{lemma}\label{lemma H dominate okay}
Let $G'$ be a $\mathcal{C}$-free graph with specified vertex $h$. If $B$ is a nearly $h$-dominated block and $u\in B$ is the vertex not adjacent to $h$, then $G'+uh$ is $\mathcal{C}$-free.
\end{lemma}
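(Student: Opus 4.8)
The plan is to show that every cycle in $G'+uh$ has length at most $k-1$; this suffices because $\c{C}$ is $k$-dense, and hence by Property~2 of Definition~\ref{definition k dense} it contains no cycle of length less than $k$. First I would record the easy structural facts. Since $u$ is not adjacent to $h$ in $B$, the block $B$ cannot be a $K_2$ (in a $K_2$ the two vertices are adjacent), so $B$ is $2$-connected and in $B':=B+uh$ the vertex $h$ dominates every other vertex. Adding the edge $uh$ between two vertices of the single block $B$ does not merge $B$ with any neighbouring block, so $B'$ is again a block of $G'+uh$ and the remaining block structure is unchanged.

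Next I would split into two cases according to whether a given cycle $C$ of $G'+uh$ uses the new edge $uh$. If it does not, then $C$ is already a cycle of $G'$, and the $\c{C}$-freeness of $G'$ shows that its length is not in $\c{C}$. If $C$ does use $uh$, then since a cycle is $2$-connected it lies inside a single block, and as it contains the edge $uh\in B'$ we conclude $C\subseteq B'$; in particular $h\in C$.

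The crux is to bound the length of such a $C$. Deleting $h$ from $C$ leaves a path on $|C|-1$ vertices which avoids $h$ and lies in $B'$; since the only edge of $B'$ not already present in $B$ is $uh$, and that edge is incident to $h$, this $h$-avoiding path in fact lies entirely in $B$. Because $B$ is nearly $h$-dominated we have $r_B=h$, so this is an $r_B$-avoiding path in $B$. Here lies the main obstacle, and the reason the argument is valid: Lemma~\ref{lemma short avoiding path} must be applied to the original graph $G'$, which is known to be $\c{C}$-free, rather than to $G'+uh$, whose $\c{C}$-freeness is precisely what we are trying to establish. This is legitimate exactly because adding $uh$ creates no new $h$-avoiding paths, so the longest $r_B$-avoiding path in $B$ is the same whether computed in $G'$ or in $G'+uh$. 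Lemma~\ref{lemma short avoiding path} then gives that this path has length at most $k-3$, so $|C|\le (k-3)+2=k-1<k$, and therefore $C\notin\c{C}$. Combining the two cases shows that $G'+uh$ contains no cycle of $\c{C}$, i.e.\ it is $\c{C}$-free.
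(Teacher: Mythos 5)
Your proof is correct and follows essentially the same route as the paper: a cycle of $\mathcal{C}$ created by $uh$ would lie in $B$, and deleting $h$ from it yields an $h$-avoiding (i.e.\ $r_B$-avoiding, since $r_B=h$ for a nearly $h$-dominated block) path of length at least $k-2$ in $B$, contradicting Lemma~\ref{lemma short avoiding path} applied to the $\mathcal{C}$-free graph $G'$. Your explicit remark that the lemma must be invoked on $G'$ rather than $G'+uh$, and that this is harmless because the new edge is incident to $h$, is a careful articulation of a point the paper leaves implicit.
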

\begin{proof}
	If adding the edge $uh$ creates a cycle in $\mathcal{C}$, this cycle would be contained in the vertices of $B$, which implies that there must be a $u\text{--}h$ path of length at least $k-1$ in $B$, and hence an $h$-avoiding path of length $k-2$ in $B$, but this contradicts Lemma \ref{lemma short avoiding path}.
\end{proof}

We remind the reader that only unfinished $H$-umbrellas are in $H$, which in particular means the unique neighbor of the handle in an $h$-umbrella of $H$ must be unfinished. We also recall for $k\geq 6$, we require that both $C_k$ and $C_{k+1}$ are in $\mathcal{C}$ for $\mathcal{C}$ to be $k$-dense (but for $k=5$ we do not require $C_6\in \mathcal{C}$).

\begin{lemma}\label{lemma mini monster I dealing with H}
	Let $G$ be $k$-fantastic for $k\ge 6$. Let $u,v\in H-h$ be distinct vertices such that one of the following holds:
	\begin{enumerate}
		\item $u$ and $v$ are in distinct nearly $h$-dominated blocks,
		\item $u$ and $v$ are both handles in distinct $h$-umbrellas, or
		\item One of $u, v$ is in a nearly $h$-dominated block while the other is the handle in an $h$-umbrella.
	\end{enumerate}
	Let $B$ be the block containing the edge $uv$ in $G':=G+uv$, and let $a,b\in B-h$ be the vertices such that $ah,bh\not\in E(G')$. If $G'$ is $\mathcal{C}$-free, then both $G'+ah$ and $G'+bh$ are also $\mathcal{C}$-free.
\end{lemma}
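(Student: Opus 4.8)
The plan is to handle the three configurations uniformly by first pinning down the block $B$ created in $G'=G+uv$. In each case $B$ arises by merging the two structures containing $u$ and $v$ along their common path through $h$, so I would begin by verifying that $h\in B$, that every vertex of $B-h$ other than $a$ and $b$ is adjacent to $h$, and that $a,b$ are as described: in case (2) one gets $\{a,b\}=\{u,v\}$ (the two handles), in case (1) $a,b$ are the unique non-$h$-neighbours of the two nearly $h$-dominated blocks, and case (3) is the hybrid. Thus $B$ is ``doubly nearly $h$-dominated.'' Since the conclusion is symmetric in $a$ and $b$, it suffices to show $G'+ah$ is $\mathcal{C}$-free; the same argument yields $G'+bh$, with attention paid to cases (1) and (3), where $a$ and $b$ occupy structurally different positions.

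The crucial observation is that $a$ and $h$ already lie in the single block $B$, so adding $ah$ merely enlarges $B$ to $B+ah$ and merges no other blocks; hence every cycle that $ah$ creates is contained in $B+ah$ and corresponds to an $a$--$h$ path inside $B$. This is the generalisation of Lemma~\ref{lemma H dominate okay}, whose clean proof (a forbidden cycle would force an $h$-avoiding path of length $>k-3$, contradicting Lemma~\ref{lemma short avoiding path}) no longer closes here, precisely because the merged block $B$ can carry long $h$-avoiding paths crossing from one side of $B$ to the other through the opponent's edge $uv$. I would therefore suppose for contradiction that $G'+ah$ contains some $C_\ell\in\mathcal{C}$ using $ah$; $k$-density forces $\ell\ge k$, giving an $a$--$h$ path $P$ in $B$ of length $\ell-1\ge k-1$. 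The aim becomes to convert $P$ into a cycle lying entirely in $G'$ whose length is forbidden, contradicting that $G'$ is $\mathcal{C}$-free.

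For the generic situation I would exploit the length flexibility of Lemma~\ref{lemma paths of every length from roots}: inside each nearly $h$-dominated block or umbrella base, a finished endpoint admits $h$-avoiding paths of all lengths up to $k-3$, so the cycles of $G'$ passing through $h$ and through $uv$ realise an entire interval of lengths. Since every vertex of a nearly $h$-dominated block lies within distance $2$ of $h$, this interval contains $[\,k,\,2k-3\,]$ whenever the two endpoints are finished, while $P$ itself yields $\ell\le 2k-3$ by Lemma~\ref{lemma short avoiding path}; realising $C_\ell$ directly as such a cycle of $G'$ (the two sides meet only at $h$, so it is a genuine cycle) then gives the contradiction. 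The sharpest clean instance is case (2): a dangerous $a$--$h$ path of length $k-1$ can be rerouted at its closing end through the handle-neighbour $w_u$ to produce a cycle of length $\ell+1=k+1$ in $G'$, which is forbidden because $C_k,C_{k+1}\in\mathcal{C}$ for $k\ge 6$.

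The main obstacle is the extremal behaviour, and this is exactly where the hypotheses $k\ge 6$ and the unfinishedness encoded in $H$ enter. When a relevant endpoint (a handle-neighbour, or $u$ or $v$ itself) is \emph{unfinished}, Lemma~\ref{lemma paths of every length from roots} no longer fills the interval up to $k-2$, and the interval argument can miss the single forbidden value $\ell$. In that regime I would use the ``$+1$ detour'': replacing the closing edge $ah$ by a two-edge path $a\text{--}x\text{--}h$ through an $h$-neighbour $x$ of $a$ that avoids the cycle produces a cycle of length $\ell+1$ in $G'$, so $\mathcal{C}$-freeness of $G'$ rules out $\ell$ whenever $\ell+1$ is forbidden. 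Unfinishedness caps the relevant $h$-avoiding path lengths, forcing $\ell$ into the narrow window controlled by $C_k,C_{k+1}\in\mathcal{C}$ together with $k$-dense property~3, so the detour (anchored at the consecutive forbidden lengths $k,k+1$) always applies. Matching each extremal shape of $P$ to one of these two mechanisms --- interval realisation when the endpoints are finished, and the detour anchored at $C_k,C_{k+1}$ otherwise --- together with checking that the auxiliary neighbour $x$ can always be kept off the cycle, is the crux and will occupy the bulk of the casework.
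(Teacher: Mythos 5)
Your opening moves match the paper's: assume $G'+ah$ contains a forbidden cycle, extract a long $a$--$h$ path $P$ inside $B$, argue that $P$ must cross the edge $uv$ (otherwise Lemma~\ref{lemma H dominate okay} or the unfinishedness built into the definition of $H$ gives a contradiction), and then detour the closing edge $ah$ through a second neighbour $x$ of $a$ to manufacture a longer cycle lying entirely in $G'$. Up to that point the proposal is sound. (One small correction: you do not need to keep $x$ off the cycle --- if $x$ lies on $P$, rerouting through $x$ produces a cycle in $G'$ of the \emph{same} forbidden length, an immediate contradiction, which is how the paper disposes of that subcase.)

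The gap is in how you close the argument. Your two finishing mechanisms do not cover cases (1) and (3). The ``interval realisation'' via Lemma~\ref{lemma paths of every length from roots} needs the relevant endpoints to be finished, but nearly $h$-dominated blocks sit in $H$ with \emph{no} finishedness guarantee, so that mechanism is simply unavailable there. Your fallback, the $+1$ detour, only yields $C_{\ell+1}\notin\mathcal{C}$ (the detour cycle lives in the $\mathcal{C}$-free graph $G'$), which contradicts nothing unless $C_{\ell+1}\in\mathcal{C}$; and your claim that unfinishedness forces $\ell$ into the window where $C_k,C_{k+1}$ apply is false for nearly $h$-dominated blocks, whose vertices may head $h$-avoiding paths of length up to $k-3$, allowing $\ell$ to be as large as roughly $2k-3$ with only $C_\ell\in\mathcal{C}$ known. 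The idea you are missing is a \emph{shortcut through $h$}: every vertex of $B-h$ except $a$ and $b$ is adjacent to $h$, so in the detour cycle $(h,x,a=v_0,v_1,\dots,v_{\ell-1},h)$ at least one of $v_{k-2},v_{k-1}$ is adjacent to $h$ (at most one of them can be $b$, and neither is $a$), and truncating there produces a cycle of length exactly $k$ or $k+1$ inside $G'$ --- both forbidden since $k\ge 6$. This one step handles every value of $\ell$ uniformly, needs no finishedness, and is precisely where the hypothesis $C_k,C_{k+1}\in\mathcal{C}$ is spent; without it your casework cannot be completed.
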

\begin{proof}
	Towards a contradiction, we may assume $G'+ah$ contains a cycle from $\mathcal{C}$, so there is an $a\text{--}h$ path in $B$ of length $\ell$ with $\ell\geq k-1$ such that $C_{\ell+1}\in \mathcal{C}$, say the path $P=(v_0:=a,v_1,v_2,\dots,v_{\ell-1},h)$.  We can assume without loss of generality that the vertices $a$ and $u$ are both in the same nearly $h$-dominated block or $h$-umbrella in $G$ (and consequently the same is true for $b$ and $v$). 
	
	We claim that this path contains the vertices $u$ and $v$ in that order. If not, then $P$ is either completely contained inside a nearly $h$-dominated block, which contradicts Lemma \ref{lemma H dominate okay}, or $P$ is completely contained inside an $h$-umbrella in $H$. By the definition of $H$, the neighbor of $a$ in this $h$-umbrella $v_1$ must be unfinished, but $(v_1,v_2,\dots,v_{\ell-1})$ is an $h$-avoiding path of length $\ell-2\geq k-3$, giving us a contradiction and proving the claim. 
	
  Let $A_1$ denote the sets of vertices in the nearly $h$-dominated block or $h$-umbrella in $G$ containing $a$ and $u$, and let $A_2$ denote the set of vertices in the nearly $h$-dominated block or $h$-umbrella in $G$ containing $b$ and $v$. Then $u=v_i$ and $v=v_{i+1}$ for some $0\leq i\leq \ell-1$, and note that $\{v_j\mid 0\leq j\leq i\}\subseteq A_1$ while $\{v_j\mid i+1\leq j\leq \ell-1\}\subseteq A_2$.
	
  We now claim that we can find a cycle of length $\ell+2$ in $G'$ of the form $(h,x,a=v_0,v_1,\dots,v_{\ell-1},h)$ for some vertex $x\in A_1$. Indeed, if $A_1$ is an $h$-umbrella, we can choose $x$ to be the unique neighbor of $a$ in $A_1$, then by the definition of $u$, we have that $u=a$, so $(h,x,a=u=v_0,v=v_1,v_2,\dots,v_{\ell-1},h)$ is such a cycle. Thus we may assume that $A_1$ is a nearly $h$-dominated block. Since $A_1$ is $2$-connected and non-trivial, $a$ has a neighbor $x$ that is not $v_1$ (nor $h$ since $ah\notin E(G')$). We consider two cases based on $x$.
	
	\textbf{Case 1:} The vertex $x=v_j$ for some $0\leq j\leq \ell-1$. Note that $2\leq j\leq i$ since $x\in A_1$. This implies that $v_{j-1}\in A_1 - a - h$, and so $hv_{j-1}$ is an edge. Then $(h,v_{j-1},v_{j-2},\dots,v_0=a,x=v_j,v_{j+1},\dots,v_{\ell-1},h)$ is a $C_{\ell+1}$ in $G'$, a contradiction to $C_{\ell+1}\in \c{C}$.
	
	\textbf{Case 2:} The vertex $x\neq v_j$ for any $0\leq j\leq \ell-1$. Since $x\in A_1 - a - h$, $hx\in E(G)$. Then $(h,x,a=v_0,v_1,\dots,v_{\ell-1},h)$ is a $C_{\ell+2}$ in $G'$ with $h$ and $a$ at distance $2$ along the cycle.
	
	Thus, we have exhibited a cycle $(h,x,a=u=v_0,v=v_1,v_2,\dots,v_{\ell-1},h)$ as claimed. This implies that $C_{\ell+2}\not\in \mathcal{C}$. Since $k\geq 6$, we have $C_k,C_{k+1}\in \mathcal{C}$.  We conclude $\ell\geq k$ and that $v_{k-2}$ and $v_{k-1}$ are defined. We now show that $G'$ contains either a $C_k$ or a $C_{k+1}$, which will give us a contradiction. Indeed, since $v_{k-2},v_{k-1}\neq a=v_0$, we have that $h$ is adjacent to at least one vertex $v_j$ with $j\in\{k-1,k-2\}$, so $(h,x,v_0,v_1,\dots,v_j,h)$ is a $C_{j+2}$ in $G$, where $j+2\in \{k,k+1\}$, proving the claim and completing the proof.
\end{proof} 

Our next lemma characterizes what moves within the cycle saturation game will leave the graph $k$-fantastic.  For the rest of the section, we refer to a \textit{legal move} as an allowable move in the $\c{C}$-saturation game for the implied family $\c{C}$ which is $k$-dense.

\begin{lemma}\label{lemma edges in F do not hurt}
  If $G$ is $k$-fantastic, and $u,v\in F$ are vertices such that $uv$ is a legal move, then $G+uv$ is also $k$-fantastic.  Further, $G+uv$ does not contain any nearly $h$-dominated blocks which were not in $G$.
\end{lemma}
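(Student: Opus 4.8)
The plan is to first show that any legal move $uv$ with $u,v\in F$ must satisfy $bd(u,v)\le 2$, and then to treat the cases $bd(u,v)=1$ and $bd(u,v)=2$ separately, verifying in each that $G+uv$ is again $k$-fantastic with an unchanged $H$-decomposition.

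To rule out $bd(u,v)=s\ge 3$, relabel so that $u\ne h$. Lemma~\ref{lemma path lengths between F and H} produces $u\text{--}v$ paths of every length in $[s+1,\,2+(k-2)(s-2)]$ (and an even wider interval when $v=h$), so appending the edge $uv$ yields cycles of every length in $[s+2,\,3+(k-2)(s-2)]$. The third condition in Definition~\ref{definition k dense} guarantees a forbidden length $\ell$ in exactly this range, so adding $uv$ would create a copy of $C_\ell\in\mathcal{C}$, contradicting legality. Hence $bd(u,v)\in\{1,2\}$.

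\emph{The case $bd(u,v)=1$.} Here $u,v$ lie in a common block $B$, which has at least three vertices (otherwise $uv$ would already be present), and $u,v\ne h$ (if $h\in B$ then $B$ rooted forces $r_B=h$ and full domination, making $uv$ a pre-existing edge). Adding the chord $uv$ leaves every block's vertex set unchanged, so all roots and all distances to $h$ are preserved: a within-$B$ edge cannot shorten any $w\text{--}h$ path, since all of $B$ lies at distance $d(r_B,h)$ or $d(r_B,h)+1$ from $h$. Finishedness is monotone under edge addition, so Properties~2--4 persist and no new degree-$1$ vertex appears, giving Property~5; as adjacencies to $h$ are unchanged and no $K_2$ block is created, $H$ is unchanged and no new nearly $h$-dominated block arises.

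\emph{The case $bd(u,v)=2$} is the crux. Let $(B_1,B_2)$ be the $u\text{--}v$ block geodesic with $B_1\cap B_2=\{x\}$. By Lemma~\ref{lemma block distance 2 path of length k-1} there is a $u\text{--}v$ path of length $k-1$ unless $x=h$ roots both $B_1,B_2$ and both $u,v$ are unfinished; in the former case adding $uv$ would create a $C_k\in\mathcal{C}$, contradicting legality, so we must be in the exceptional configuration. Then $h$ dominates $B_1$ and $B_2$, so in $G+uv$ the merged block $B^*=B_1\cup B_2$ is fully dominated by $h$; hence $B^*$ is rooted at $h$ and is \emph{not} nearly $h$-dominated, which already establishes the ``Further'' clause (and since no $K_2$ block is created, $H$ is unchanged). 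For Properties~1--5 of the new $F$ with $B_1,B_2$ replaced by $B^*$: connectivity and rootedness are immediate. The observation driving Property~3 is that, since $u$ is unfinished in its stem $B_1$, the contrapositive of Property~3 for $G$ forces $u$ to root no block of $F$, so $B_1$ is $u$'s only $F$-block; likewise for $v$. Thus $u,v$ root nothing in the new $F$, while every other root $w$ that now has $B^*$ as its stem was already finished in $B_1\subseteq B^*$ (or $B_2\subseteq B^*$) and stays finished; Property~4 only improves under the merge. For Property~5 the subtlety is that $B_1$ or $B_2$ may be a $K_2$ block $\{h,u\}$ (resp.\ $\{h,v\}$), making $u$ (resp.\ $v$) a degree-$1$ neighbor of $h$; Property~5 for $G$ forbids both from being $K_2$, and in the single remaining case the move raises that vertex's degree to $2$, so $G+uv$ has no more degree-$1$ neighbors of $h$ than $G$. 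I expect the main obstacle to be exactly this $bd(u,v)=2$ analysis: correctly extracting the exceptional configuration from Lemma~\ref{lemma block distance 2 path of length k-1} and tracking finishedness, stems, and the unique permitted degree-$1$ neighbor of $h$ through the block merge.
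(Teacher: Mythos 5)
Your proof is correct and takes essentially the same route as the paper's: rule out $bd(u,v)\ge 3$ via Lemma~\ref{lemma path lengths between F and H} together with the third condition of Definition~\ref{definition k dense}, and reduce $bd(u,v)=2$ to the exceptional configuration of Lemma~\ref{lemma block distance 2 path of length k-1}, where the merged block $B_1\cup B_2$ remains dominated by $h$. The only difference is one of detail: the paper simply asserts that the five properties are retained, while you verify them explicitly (and your Property~5 discussion could be shortened to the observation that adding an edge never creates a new degree-$1$ vertex).
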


\begin{proof}
	Note that adding an edge within a block does not interfere with any of the properties of being $k$-fantastic nor create a nearly $h$-dominated block. Thus we can assume $bd(u,v)=s\geq 2$. If $s \geq 3$, by Lemma \ref{lemma path lengths between F and H} there is a path of length $\ell$ for every $\ell$ with $s+1\leq \ell\leq (k-2)(s-2)+2$, which implies that $uv$ would complete a cycle in $\mathcal{C}$, a contradiction. Thus, we may assume $s=2$.
	
	Since $uv$ is a legal move there is no $u\text{--}v$ path of length $k-1$. By Lemma \ref{lemma block distance 2 path of length k-1}, both $u$ and $v$ are adjacent to $h$. Then $h$ still dominates the block resulting from adding $u$ and $v$ (in particular meaning it is not nearly $h$-dominated), and all the properties of being $k$-fantastic are retained as desired.
\end{proof}

The following lemma will allow us to focus our attention only on those cases in which our opponent makes a move that results in a graph that is not $k$-fantastic.

\begin{lemma}\label{lemma fantastic retained}
	If $G$ is $k$-fantastic but not $\mathcal{C}$-saturated, then there exists a legal move $uv$ such that $G+uv$ is $k$-fantastic.  Moreover, if $G$ has at most one nearly $h$-dominated block, then $uv$ can be chosen so that $G+uv$ has no nearly $h$-dominated blocks.
\end{lemma}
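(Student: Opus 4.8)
The plan is to produce an explicit legal move by a case analysis organised around shrinking the ``bad'' region $H$, verifying in each branch that the recomputed decomposition $G=H\cup F\cup I$ still satisfies the five defining properties. The guiding principle is that the pendant-type blocks created by a move should be absorbed into $H$ (as nearly $h$-dominated blocks or unfinished $h$-umbrellas) rather than into $F$, so that the properties of $F$ are never threatened; the hypothesis that $G$ is not saturated will be used only at the very end, to rule out the degenerate configuration in which no legal move exists at all.

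First I would eliminate a nearly $h$-dominated block $B$, should one exist. Taking $u\in B$ to be the unique vertex with $uh\notin E(G)$, the move $uh$ is legal by Lemma~\ref{lemma H dominate okay}, and afterwards $h$ dominates $B$, so $B$ becomes rooted and passes from $H$ into $F$. The points to verify are that no new root of $F$ violates ``finished in its stem''---any $w\in B-h$ that roots a block of $F$ has $B$ as its stem, since $h$ rather than $w$ roots $B$, and such a $w$ was already finished in $B$---and that, $B$ being $2$-connected, no new degree-$1$ neighbour of $h$ is created. This removes one nearly $h$-dominated block, which already settles the ``moreover'' clause in the case of a single such block. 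Having done this, I may assume $G$ has no nearly $h$-dominated block, and I must exhibit a good move creating none.

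The simplest remaining move is a legal move with both endpoints in $F$, which is handled directly by Lemma~\ref{lemma edges in F do not hurt}. If no such move exists but an unfinished $h$-umbrella is present, I would play $wh$, where $w$ is its handle; this is the crux. Let $u$ be the neighbour of $w$. Because the umbrella is unfinished, $u$ is unfinished in $B_1$, so by definition no $h$-avoiding path from $u$ in $B_1$ has length exceeding $k-4$. Apart from the handle $w$ every $u\text{--}h$ path stays inside $B_1$, so each cycle created by $wh$ runs $h,w,u,\dots,h$ and has length at most $k-1$; as $\mathcal{C}$ is $k$-dense this is not a forbidden length, and hence $wh$ is legal. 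After the move $w$ is adjacent to $h$, so $B_1\cup\{w\}$ is fully $h$-dominated and joins $F$ cleanly, with $u$ rooting no block outside it, and all five properties are retained. Finally, if $H=\{h\}$ and there are isolated vertices, I would connect one: if $h$ has no degree-$1$ neighbour in $F$ the pendant $xh$ keeps $F$ connected with a single leaf at $h$, while if $h$ already has a degree-$1$ neighbour $y$ then $xy$ turns $(\{h,y\},\{y,x\})$ into an unfinished $h$-umbrella, again absorbed into $H$ and leaving $F$ untouched. If instead there are no isolated vertices and no legal move inside $F$, then every non-edge lies within $F$ and is illegal, so $G$ is $\mathcal{C}$-saturated, contradicting the hypothesis. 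None of the moves in this paragraph creates a nearly $h$-dominated block, so the ``moreover'' statement follows as well.

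I expect the umbrella-cleanup move $wh$ to be the main obstacle: it requires simultaneously certifying legality from the fact that the handle's neighbour is unfinished---which forces every newly created cycle below length $k$---and checking that the merged block is genuinely $h$-dominated with no internal vertex rooting an external block, so that being rooted, the stem condition, the ``one unfinished block per root'' condition, and the ``one leaf at $h$'' condition all transfer at once. The surrounding bookkeeping in the other branches is routine, but must be tracked with care, since the whole argument hinges on new pendant and near-pendant structures landing in $H$ rather than in $F$.
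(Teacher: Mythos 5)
Your proposal is correct and takes essentially the same route as the paper's proof: the same four moves (completing a nearly $h$-dominated block via Lemma~\ref{lemma H dominate okay}, closing an unfinished $h$-umbrella by joining its handle to $h$, attaching an isolated vertex to $h$ or to $h$'s pendant neighbour, and otherwise playing inside $F$ via Lemma~\ref{lemma edges in F do not hurt}), with the non-saturation hypothesis invoked only to guarantee the last case yields a move. The only difference is the order in which the cases are dispatched, which is immaterial.
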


\begin{proof}
	First assume $G$ contains a nearly $h$-dominated block with $x$ the vertex in this block not adjacent to $h$.  By Lemma \ref{lemma H dominate okay} we can add $hx$, which creates a rooted block with root $h$.  This makes $G$ $k$-fantastic with no nearly $h$-dominated blocks.  Thus we may assume that $G$ contains no nearly $h$-dominated blocks.
	
	Assume $G$ has an isolated vertex $x$. If $h$ is adjacent to a vertex of degree $1$, say $y$, then  $xy$ is a legal move making an $h$-umbrella. Otherwise $hx$ is a legal move. Thus we may assume $G$ contains no isolated vertices.
	
	Suppose $G$ contains an $h$-umbrella in $H$, say with handle $y$ and $x$ its unique neighbor, and recall that $x$ cannot be finished. Thus $yh$ can be added without creating a forbidden cycle or nearly $h$-dominated block. We can then assume that $G$ contains no $h$-umbrellas in $H$, and consequently that $H=\{h\}$ is trivial.
	
  Since $H$ is trivial and there are no isolated vertices, we must have $G=F$.  By hypothesis there exists a legal move involving two vertices of $F$, and by Lemma \ref{lemma edges in F do not hurt} and any such move leaves the graph $k$-fantastic, so we are done.
\end{proof}
\subsection{Main Results for $k$-Dense Families}

We are now ready to prove our main structural result for this section.  

\begin{proposition}\label{proposition mini monster II all the cases}
Let $\c{C}$ be a $k$-dense set of cycles for some $k\ge 5$.  Then either player can play the $\mathcal{C}$-saturation game such that at the end of each of their turns, the graph is $k$-fantastic. Moreover, if $k=5$, then that player can guarantee that the graph contains no nearly $h$-dominated blocks at the end of each of their turns.

\end{proposition}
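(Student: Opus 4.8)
The plan is to prove this by showing that one player—say, the player following the strategy—can always either make a move that keeps the graph $k$-fantastic, or respond to the opponent's move so that the resulting graph is $k$-fantastic again. Since Lemma~\ref{lemma fantastic retained} already guarantees that from any $k$-fantastic (non-saturated) graph there is a legal move keeping it $k$-fantastic, the crux is handling the \emph{opponent's} move. So the main induction invariant is: at the end of each of the designated player's turns, $G^{(t)}$ is $k$-fantastic (and, when $k=5$, contains no nearly $h$-dominated blocks). The base case is immediate since the empty graph on $n$ vertices is trivially $k$-fantastic with $H=\{h\}$.

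The core of the argument is the inductive step: assume $G:=G^{(t)}$ is $k$-fantastic at the end of our turn, the opponent plays a legal edge $uv$ to produce $G'=G+uv$, and we must exhibit a (possibly empty/passing) response move that restores the $k$-fantastic property. I would organize the casework according to where $u$ and $v$ lie relative to the decomposition $V(G)=H\cup F\cup I$ (with $I$ the isolated vertices). The cleanest case is $u,v\in F$: by Lemma~\ref{lemma edges in F do not hurt}, $G'$ is already $k$-fantastic with no new nearly $h$-dominated blocks, and then Lemma~\ref{lemma fantastic retained} lets us make a follow-up move preserving the property. When one or both endpoints are isolated, I would connect the new vertex appropriately: an isolated vertex joined to $F$ should either be made adjacent to the relevant root or, if that is illegal, it will turn out (as sketched in the section outline) that its neighbor dominates the new $K_2$ block and is finished, so the structure is maintained. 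When two isolated vertices are joined, we are forced to route one of them to $h$, potentially creating a nearly $h$-dominated block or an $h$-umbrella, which is exactly why $H$ is allowed to be nontrivial. The genuinely delicate cases are those where $u$ or $v$ lies in $H$—i.e. in a nearly $h$-dominated block or an unfinished $h$-umbrella—since these can merge into larger structures. Here Lemmas~\ref{lemma H dominate okay} and especially~\ref{lemma mini monster I dealing with H} are the workhorses: the latter guarantees that after the opponent joins two $H$-type vertices we may legally add the edges from $a$ and $b$ to $h$, which is what lets us re-dominate the newly formed block by $h$ and thereby keep everything anchored at $h$.

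The hardest part, I expect, will be verifying Property~4 and Property~5 of Definition~\ref{kfan} throughout the casework—namely that no vertex $v\neq h$ ends up rooting two unfinished blocks, and that $h$ is adjacent to at most one degree-one vertex. These are exactly the global bookkeeping constraints that the opponent will try to violate by repeatedly spawning leaves or unfinished blocks at the same vertex. The strategy must therefore sometimes spend our move ``finishing'' a block (extending a path to length $k-3$ in $B-r_B$) or absorbing a pendant leaf into an $h$-umbrella, rather than making progress elsewhere; I would argue that whenever the opponent's move threatens Property~4 or~5, there is always such a corrective move available, using Lemma~\ref{lemma paths of every length from roots} to build the required finishing path and Lemma~\ref{lemma fantastic retained} to guarantee legality. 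Because Lemma~\ref{lemma mini monster I dealing with H} is stated only for $k\ge 6$, the case $k=5$ must be treated separately, which is precisely why the proposition's ``moreover'' clause promises no nearly $h$-dominated blocks when $k=5$: in that regime we never allow $H$ to contain such a block, so the $H$-merging cases collapse and the analysis is simpler (relying on the weaker requirement that only $C_5\in\c{C}$).

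Finally, I would note that the proposition is symmetric between the two players—it claims \emph{either} player can enforce the invariant—so the proof does not depend on whether we move first or second; in each case we simply respond to the most recent opponent edge (or, on our opening move if we go first, play any legal edge within $F$, which keeps $G$ $k$-fantastic by Lemma~\ref{lemma edges in F do not hurt}). Combined with Lemma~\ref{lemma no long cycles}, which bounds the circumference of any $k$-fantastic graph by $k-1$, this invariant feeds directly into the edge-count bound of Theorem~\ref{thm:dense} via the Erd\H{o}s--Gallai theorem (Theorem~\ref{thm:EG}), so establishing the invariant is the entire content of the proposition.
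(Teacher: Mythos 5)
Your outline follows the same overall architecture as the paper's proof: induction on the invariant that $G^{(t)}$ is $k$-fantastic at the end of your turns, casework on where the opponent's edge $uv$ lands relative to the decomposition into $H$, $F$, and the isolated vertices $I$, with Lemma~\ref{lemma edges in F do not hurt}, Lemma~\ref{lemma fantastic retained}, Lemma~\ref{lemma H dominate okay}, and Lemma~\ref{lemma mini monster I dealing with H} as the workhorses, and with the $k=5$ case isolated so that nearly $h$-dominated blocks never arise. However, as written the proposal has two concrete gaps. First, you never address the case $u\in F-h$, $v\in H-h$ with $bd(u,h)\ge 2$, which is the one place where condition~3 of Definition~\ref{definition k dense} actually enters the argument: there, Lemma~\ref{lemma path lengths between F and H} produces $u$--$h$ paths of every length in an interval long enough that the $k$-density hypothesis forces one of the resulting cycles to lie in $\c{C}$, so the opponent's move is simply illegal. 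Without this step there is no reason the opponent cannot attach a distant part of $F$ to $H$ and wreck the rooted structure, and no other part of your outline rules it out.

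Second, your proposed mechanism for preserving Properties~3 and~4 --- ``spending our move finishing a block'' by extending a path to length $k-3$ inside $B-r_B$ --- is not what the paper does and is not obviously available: a single edge added inside an existing block need not lengthen the relevant $r_B$-avoiding path, and may be illegal. The paper instead restores these properties in Cases~3a and~3b by attaching the newly pendant vertex $v$ either to an unfinished vertex $x$ of the offending unfinished block (so that $u$ again roots only one unfinished block) or to the root $r_B$ (so that the new block is rooted at $r_B$ rather than at the unfinished vertex $u$); both moves are legal precisely because the target vertex is unfinished, so no path of length $k-1$ is created. These are local reattachment moves, not path-extension moves, and identifying them is the substantive content of the inductive step that your outline leaves unexecuted.
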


\begin{proof}
	
	Note that $G^{(0)}$ and $G^{(1)}$ are both trivially $k$-fantastic and do not contain nearly $h$-dominated blocks. Now let us assume that $G^{(t)}$ is $k$-fantastic for some $t\geq 0$, and if $k=5$ that further $G^{(t)}$ contains no nearly $h$-dominated blocks. We will show that we can play such that $G^{(t+2)}$ is $k$-fantastic, unless $G^{(t+1)}$ is already $\mathcal{C}$-saturated. In the analysis that follows, when $k=5$ we will not verify that our own move does not create a new nearly $h$-dominated block, but it is easy to verify that the only time our strategy has our move creating such a block is when $k\ge 6$ in Case 5c.
	
	We consider cases based on the edge added at time $t+1$ which we denote by $e=uv$.  We also let $I$ denote the set of isolated vertices of $G^{(t)}$.
	
	\textbf{Case 0:} $G^{(t+1)}$ is $k$-fantastic.  In this case we may apply Lemma \ref{lemma fantastic retained}, and in particular this leaves it so that $G^{(t+2)}$ has no nearly $h$-dominated blocks when $k=5$. Note that by Lemma \ref{lemma edges in F do not hurt} this handles the case that $u,v \in F$.

  \textbf{Case 1:} $u,v\in I$. We play the edge $uh$, which creates an $h$-umbrella in $H$ and maintains $G^{(t+2)}$ being $k$-fantastic.

\textbf{Case 2:} $u=h$. First note that we do not need to consider the case when $v\in F$ since $h\in F$ as well.

\underline{Case 2a:} $v\in H-h$. If $hv$ is contained inside a nearly $h$-dominated block, then this block becomes rooted with root $h$, so $G^{(t+1)}$ is $k$-fantastic and we are in Case 0. The only other possibility is that $v$ is the handle of an $h$-umbrella since all other vertices in $H$ are adjacent to $h$. As such, adding the edge $hv$ causes this $h$-umbrella to become a single block which is rooted with root $h$, so the graph is still $k$-fantastic and we are in Case 0.

\underline{Case 2b:} $v \in I$. If $h$ was not adjacent to a degree $1$ vertex in $G$, then we are still $k$-fantastic and in Case 0. Otherwise, by Property~\ref{property one leaf} there is exactly one other degree $1$ vertex $x$ adjacent to $h$, and adding the edge $vx$ creates a block rooted at $h$, leaving the graph $k$-fantastic.

\textbf{Case 3:} $u \in F-h$ and $v \in I$.  This adds an unfinished $K_2$ block rooted at $u$. We only consider the cases where $u$ is the root of another unfinished block (which violates Property \ref{property one unfinished}), and the case where $u$ is an unfinished vertex (which violates Property \ref{property stem finished}). In any other case, $G^{(t+1)}$ remains $k$-fantastic since the Proprieties \ref{property stem finished} and \ref{property one unfinished} cannot be affected by a new unfinished $K_2$ block, and thus we are in Case 0.

\underline{Case 3a:} $u$ is the root of an unfinished block $B$. We add an edge from $v$ to some unfinished vertex $x\in B$. This does not create any cycle of length at least $k$ since $x$ was unfinished, so this is a legal move. Now $u$ is only adjacent to at most one unfinished block again, and Property \ref{property one unfinished} holds. Since $v$ is adjacent to the root of this block, Property \ref{property rooted} holds as well.

\underline{Case 3b:} $u$ is an unfinished vertex in some block $B$. We add the edge $vr_B$. This does not create a cycle of length at least $k$ since $u$ was unfinished, so this is a legal move. The resulting block is rooted with root $r_B$, so properties \ref{property rooted} and \ref{property stem finished} hold in $G^{(t+2)}$. Thus, $G^{(t+2)}$ is $k$-fantastic.

\textbf{Case 4:} $u \in H-h$ and $v \in I$.

\underline{Case 4a:} $u$ is in a nearly $h$-dominated block $B$. We add the edge containing $h$ missing from $B$, which is legal by Lemma \ref{lemma H dominate okay}. This creates an $h$-umbrella, and thus $G^{(t+2)}$ is $k$-fantastic.

\underline{Case 4b:} $u$ is in an $h$-umbrella consisting of blocks $B_1$ and $B_2$ with $h\in B_1$. Let $y$ be the handle and $x$ its unique neighbor (possibly with $x=u$ or $y=u$). Then by the definition of $H$, $x$ is unfinished. Then we can add the edge $yh$ as this does not create a cycle in $\mathcal{C}$ due to the fact that $x$ is unfinished. Then $\tilde{B}_1:=B_1\cup B_2$ becomes a block rooted at $h$, and so if $\tilde{B}_2$ is the block containing $uv$, $\tilde{B}_1$ and $\tilde{B}_2$ constitute an $h$-umbrella, and thus $G^{(t+2)}$ is $k$-fantastic.

\textbf{Case 5:} $u,v \in H-h$.

\underline{Case 5a:} $u$ and $v$ are in the same nearly $h$-dominated block or the same $h$-umbrella in $H$. If $u$ and $v$ are both in a nearly $h$-dominated block, then this block remains nearly $h$-dominated. If $u$ and $v$ are both in the same $h$-umbrella, this either remains an $h$-umbrella or if either $u$ or $v$ was the handle in $G$, this becomes a nearly $h$-dominated block. In either case $G^{(t+1)}$ is $k$-fantastic, so we are in Case 0.

\underline{Case 5b:} $u$ is in an $h$-umbrella and is not the handle. By Case 5a, we may assume that $v$ is not in the same $h$-umbrella as $u$. If $v$ is in a nearly $h$-dominated block or $v$ is the handle of an $h$-umbrella, then the block containing $uv$ in $G^{(t+1)}$ is nearly $h$-dominated, so by Lemma \ref{lemma H dominate okay}, we can add the edge to turn this block into a rooted block with root $h$, which results in an $h$-umbrella so $G^{(t+2)}$ is $k$-fantastic.

It remains to consider when $v$ is also in an $h$-umbrella but not the handle. Note that the addition of the edge $uv$ forms a block rooted at $h$, adjacent to two rooted $K_2$ blocks in $G^{(t+1)}$. Let $x$ and $y$ be the handles of the original $h$-umbrellas containing $u$ and $v$ respectively, and let $x'$ and $y'$ be the neighbors of $x$ and $y$. If either $x'$ or $y'$ are unfinished in $G^{(t+1)}$, then we can add the edge $xh$ or $yh$ creating an $h$-umbrella and leaving $G^{(t+2)}$ $k$-fantastic. If both $x'$ and $y'$ are finished in $G^{(t+1)}$, then all these blocks are in $F$, so  $G^{(t+1)}$ is $k$-fantastic and we are in Case 0.

\underline{Case 5c:} The conditions of Case 5a and Case 5b are not met. Then one of the following holds:
\begin{enumerate}
	\item $u$ and $v$ are in distinct nearly $h$-dominated blocks,
	\item $u$ and $v$ are both handles of distinct $h$-umbrellas, or
	\item One of $u,v$ is in a nearly $h$-dominated block while the other is a handle of an $h$-umbrella.
\end{enumerate}
Note that if $k=5$, then we do not have nearly $h$-dominated blocks, and $u$ and $v$ cannot both be handles of $h$-umbrellas as $uv$ would create a $C_5$, which is forbidden. Hence, we may assume $k\geq 6$. Let $B$ be the block containing $uv$ in $G^{(t+1)}$, and note that $B$ is only adjacent to other blocks at $h$. By Lemma \ref{lemma mini monster I dealing with H}, in all cases we can add an edge containing $h$ and some other vertex of $B$, which will turn $B$ into a nearly $h$-dominated block, leaving $G^{(t+2)}$ $k$-fantastic. 
Since this case only happens with $k\geq 6$, we do not create $h$-dominated blocks when $k=5$, as desired.

\textbf{Case 6:} $u \in F-h$ and $v \in H-h$.

\underline{Case 6a:} The block distance $s := bd(u,h) \geq 2$. By Lemma \ref{lemma path lengths between F and H} we have a $u\text{--}h$ path of length $\ell$ for every $s\leq \ell \leq (k-2)(s-1)+1$, and thus $G^{(t+1)}$ contains a $C_{\ell'}$ for every $\ell'$ satisfying 
\[
s+d(h,v)+1\leq \ell'\leq (k-2)(s-1)+1+d(h,v)+1.
\] 
If $s=2$, then since $1\leq d(h,v)\leq 2$, we have that $G^{(t+1)}$ contains cycles of every length $\ell'$ with $5\leq \ell'\leq k+1$. In particular it contains $C_k$, a contradiction, so we may assume $s\geq 3$. If $d(h,v)=1$, then we have cycles of length $\ell'$ for all $s+2\leq \ell'\leq 3+(k-2)(s-2)<3+(k-2)(s-1)$, contradicting the fact that $\mathcal{C}$ must contain one such cycle. If $d(h,v)=2$, then let $s':=s+1$, and note that we have cycles of all lengths $\ell'$ with $s'+2\leq \ell'\leq (k-2)(s'-2)+3<(k-2)(s'-2)+4$, again a contradiction to the definition of $\mathcal{C}$ being $k$-dense.

\underline{Case 6b:} The block distance $s:= bd(u,h)=1$. Since $h$ roots every block contained in $F$ that $h$ is in, $uh\in E(G)$. Let $B$ be the block containing the edge $uh$. If $v$ is in a nearly $h$-dominated block or is the handle of an $h$-umbrella, then the addition of $uv$ just creates a nearly $h$-dominated block, leaving $G^{(t+1)}$ $k$-fantastic. If $v$ is in an $h$-umbrella but not the handle, then the addition of $uv$ creates a larger $h$-umbrella, which again leaves $G^{(t+1)}$ $k$-fantastic. Thus we are in Case 0.
\end{proof}

We can now prove our main result of this section.

\begin{proof}[Proof of Theorem~\ref{thm:dense}]
	By Proposition \ref{proposition mini monster II all the cases}, Mini can play such that at the end of each of her turns, $G^{(t)}$ is $k$-fantastic. In particular, with this strategy, either $G^{(T)}$ or $G^{(T-1)}$ is $k$-fantastic. Lemma \ref{lemma no long cycles} implies that either $G^{(T)}$ or $G^{(T-1)}$ contains no cycles of length $k$ or more, and so by Theorem \ref{thm:EG}, we must have
	\[
	|E(G^{(T)})|\leq \frac{(k-1)(n-1)}2+1.\qedhere
	\]
\end{proof}

\subsection{Near Optimality of the Conditions for $k$-dense Families}
Our work in the previous subsection shows that if $\c{C}$ is a family of $k$-dense cycles, then either player can play so that $G^{(t)}$ always has circumference less than $k$, which gives our desired bound on $\sat_g(n,\c{C})$ by Theorem~\ref{thm:EG}.  In this subsection we show that a slight loosening of the definition of $k$-fantastic families makes such a strategy impossible.  More precise, we prove the following. 
\begin{proposition}\label{prop:opt}
	Let $k\geq 5$. If $\mathcal{C}$ is a set of cycles such that there exists an $s\geq 3$ with $C_\ell \notin \mathcal{C}$ for all $s\leq \ell \leq 4 +(k-2)(s-2)+2(k-2)^2$, then for $n$ sufficiently large, either player can play the $\mathcal{C}$-saturation game such that the game ends with circumference at least $k$.
\end{proposition}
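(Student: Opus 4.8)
The plan is to exploit the fact that in a saturation game edges are only ever added, so the circumference of $G^{(t)}$ is non-decreasing in $t$. Consequently it suffices to show that the forcing player (either Max or Mini, the argument being symmetric) can guarantee that at some point a \emph{legal} move creates a cycle of length at least $k$: once such a cycle appears it survives until the end of the game, so $G^{(T)}$ has circumference at least $k$. If at any moment the opponent themselves completes a cycle of length $\ge k$ we are immediately done, so we may assume throughout that the opponent only ever creates cycles of length less than $k$. Since $n$ is large and the forcing player's construction will use only a bounded (in $k$ and $s$) number of vertices, there are always isolated vertices available; in particular the game cannot end before the construction is complete, and the forcing player can always recruit fresh vertices regardless of the opponent's play.

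The core of the argument is a gadget, built on fresh vertices, that realizes a large window of $u\text{--}v$ path lengths all lying inside the allowed interval $[s,\,4+(k-2)(s-2)+2(k-2)^2]$. Concretely I would have the forcing player build a chain of rooted (dominated) blocks mirroring the $k$-fantastic structure: blocks in which, between the two cut vertices, one can route a path of each length from $1$ up to $k-2$. Chaining $s$ such blocks produces two vertices $u,v$ with $bd(u,v)=s$ for which the reasoning of Lemma~\ref{lemma path lengths between F and H} gives $u\text{--}v$ paths of every length $\ell$ with $s+1\le \ell\le 2+(k-2)(s-2)$. Since $s\ge 3$ we have $2+(k-2)(s-2)\ge k-1$, so among these there is a path of length at least $k-1$; closing it with the edge $uv$ therefore creates a cycle of length at least $k$, while every cycle length so produced lies in $[s+2,\,3+(k-2)(s-2)]$, which is contained in the allowed window and so meets $\c{C}$ nowhere. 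Thus, absent interference, the closing move is legal and finishes the proof.

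The main obstacle — and the reason the requested window is wider than the $3+(k-2)(s-2)$ cutoff from the definition of $k$-dense — is the opponent's interference with the gadget. The forcing player's closing edge $uv$ is legal only if \emph{none} of the cycles it completes is forbidden, so I must bound the length of the longest cycle through $uv$. Here I would use the standing assumption that the opponent keeps the circumference below $k$: exactly as in Lemma~\ref{lemma no long cycles} and Lemma~\ref{lemma short avoiding path}, this forces every block the opponent helps create to have bounded internal path lengths, which caps how much any opponent edge can lengthen a $u\text{--}v$ route through the gadget. Accounting for the boundedly many places at which such detours can be inserted yields a total possible increase of at most $2(k-2)^2$, so that every cycle completed by $uv$ has length at most $4+(k-2)(s-2)+2(k-2)^2$ and hence still lies in the allowed window. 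I expect the delicate part of the write-up to be making this extension bound precise — formalizing how an opponent edge can alter the block structure of the gadget, handling the requirement that no forbidden cycle of length below $s$ is created while the gadget is assembled (which constrains how the dominated blocks can be formed when $s>3$), and checking that the forcing player always retains at least one legal closing move producing a cycle of length in $[k,\,4+(k-2)(s-2)+2(k-2)^2]$.
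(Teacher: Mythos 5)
There is a genuine gap, and it is fatal to the approach as proposed. Your gadget --- a chain of $s$ dominated blocks each realizing $u$--$v$ paths of every length from $1$ to $k-2$ between its cut vertices --- cannot in general be built legally. Any block containing two $u$--$v$ paths of lengths $1$ and $2$ contains a triangle, and more generally your blocks contain cycles of every length from $3$ up to $k-1$. But the hypothesis of Proposition~\ref{prop:opt} only guarantees that $C_\ell\notin\mathcal{C}$ for $\ell$ in the window $[s,\,4+(k-2)(s-2)+2(k-2)^2]$; when $s>3$ the family $\mathcal{C}$ may contain $C_3,\dots,C_{s-1}$, and then no move of your construction phase is legal. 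You flag this yourself as a ``constraint'' to be handled in the write-up, but it is not a technicality: the gadget is impossible whenever $C_3\in\mathcal{C}$. A second, independent gap concerns the closing move. Your $2(k-2)^2$ slack only rules out cycles through $uv$ that are \emph{too long}; it does nothing to prevent the opponent from creating a \emph{short} $u$--$v$ connection (say a path of length $2$ through a fresh vertex) over the many turns your construction takes, after which the edge $uv$ would complete a forbidden cycle of length less than $s$. Nothing in your argument forces every $u$--$v$ path to have at least $s$ vertices.

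The paper's proof is designed precisely to avoid both problems. The forcing player never builds any cycle at all: they repeatedly attach an isolated vertex to the endpoint of a current longest path, a move that is always legal no matter what $\mathcal{C}$ contains, until some longest path $(x_1,\dots,x_m)$ has $m\ge k':=3+(k-2)(s-2)$, and then they play the single chord $x_1x_{k'}$. The choice of $u=x_1$ and $v=x_{k'}$ as endpoints of an initial segment of a \emph{longest} path in a graph of circumference less than $k$ is what controls both tails: Lemma~\ref{lemma density with given circumference} bounds every $x_1$--$x_{k'}$ path above by $k'-1+2(k-2)^2$ edges (no too-long cycle), while a weaving argument shows that any $x_1$--$x_{k'}$ path must pass through at least $s$ vertices of the longest path, since consecutive such vertices can be at most $k-2$ apart along it (no too-short cycle). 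Hence every cycle the chord completes has length in the permitted window, the chord is legal, and it creates a cycle of length $k'\ge k$. If you want to salvage your approach, you would need to replace the hand-built gadget with some structure whose construction is cycle-free and which comes with a lower bound, not just an upper bound, on the lengths of all $u$--$v$ paths --- which is essentially what the longest-path device accomplishes.
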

As a point of comparison, we remind the reader that for $k$-dense families and any $s\ge 3$, we have $C_\ell\in \c{C}$ for some
\[s+2\le \ell\le 3+(k-2)(s-2),\]
which is very close to the conditions of Proposition~\ref{prop:opt} when $s$ is large in terms of $k$.

To emphasize, Proposition~\ref{prop:opt} does not say that $\sat_g(n,\c{C})$ will not be linear for $\c{C}$ as in the proposition, only that any strategy in the $\c{C}$-saturation game which tries to guarantee that $G^{(t)}$ has circumference smaller than $k$ is doomed to fail.  Thus new ideas would be needed for computing $\sat_g(n,\c{C})$ for families of this form.

To prove Proposition~\ref{prop:opt}, we first establish the following technical lemma.

\begin{lem}\label{lemma density with given circumference}
	Let $G$ be a graph of circumference less than $k\ge 3$ and $P=(x_1,\ldots,x_{m})$ a path of maximum length in $G$.  For all $m'\le m$, we have that the longest path between $x_1$ and $x_{m'}$ has at most $m'-1+2(k-2)^2$ edges.
\end{lem}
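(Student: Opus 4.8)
The plan is to reduce the problem to a statement about a single block, and then bound the longest path inside a $2$-connected graph purely in terms of its circumference.

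First I would set up the block decomposition. Write $Q$ for a longest $x_1$--$x_{m'}$ path and let $P'=(x_1,\dots,x_{m'})$ be the sub-path of $P$, so $|E(P')|=m'-1$. Since all $x_1$--$x_{m'}$ paths share the same block geodesic $(B_1,\dots,B_r)$ with cut vertices $x_1=c_0,c_1,\dots,c_{r-1},c_r=x_{m'}$, both $P'$ and $Q$ split as concatenations $P'=P'_1\cdots P'_r$ and $Q=Q_1\cdots Q_r$, where $P'_i,Q_i$ are $c_{i-1}$--$c_i$ paths inside $B_i$ of lengths $p_i$ and $q_i$; here $\sum p_i=m'-1$ and $\sum q_i=|E(Q)|$. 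Thus $|E(Q)|-(m'-1)=\sum_{i=1}^r (q_i-p_i)$, and it suffices to control this excess.

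The key point is that maximality of $P$ forces $q_i\le p_i$ for every $i<r$. Indeed, for $i<r$ the block $B_i$ lies strictly before $x_{m'}$ on $P$, and because $P$ meets each block in a single contiguous stretch, $P$ intersects $B_i$ exactly in $P'_i$; the remainder of $P$ touches $B_i$ only at $c_{i-1},c_i$. Hence replacing $P'_i$ by a longest $c_{i-1}$--$c_i$ path of $B_i$ yields another path of $G$, which by maximality of $P$ cannot be longer, so $q_i\le p_i$. The block $B_r$ is the only exceptional one, because the tail $(x_{m'},\dots,x_m)$ of $P$ may re-enter $B_r$. Consequently $|E(Q)|-(m'-1)\le q_r-p_r\le q_r$, and everything comes down to bounding $q_r$, the length of a longest path between two vertices of the single block $B_r$.

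This last task is the content of a self-contained sub-lemma: \emph{in a $2$-connected graph $B$ with circumference at most $k-1$, every path has length at most $2(k-2)^2$}. To prove it, take a longest path $R=(w_0,\dots,w_\ell)$ of $B$. Since $R$ is longest, both endpoints have all their neighbours on $R$, and any edge $w_aw_b$ (or, more generally, any $R$-avoiding detour from $w_a$ to $w_b$) yields a cycle of length $(b-a)+(\text{detour length})\le k-1$, so its span satisfies $b-a\le k-2$. Because $B$ is $2$-connected, the edge $w_iw_{i+1}$ lies on a cycle for each $i$, so every ``gap'' $i$ of $R$ is covered by such a span of length at most $k-2$. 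Taking a minimal cover by spans gives a staircase of $m$ spans with $\ell\le m(k-2)$ up to overlap savings, and one then extracts from these overlapping spans a single cycle that uses $\Omega(m)$ of them; since the circumference is at most $k-1$ this forces $m=O(k)$, and tracking constants yields $\ell\le 2(k-2)^2$. Combining with the reduction gives $|E(Q)|\le (m'-1)+q_r\le (m'-1)+2(k-2)^2$.

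The main obstacle I anticipate is the cycle-extraction step in the sub-lemma: turning the family of mutually overlapping spans covering $R$ into one genuinely simple cycle of length proportional to $m$, while making sure that detours through vertices off $R$ do not secretly share vertices and destroy simplicity. The factor $2$ in the target bound suggests there is room to be wasteful here---for example, passing to a pairwise-disjoint subfamily of roughly $m/2$ spans, which is easier to assemble into a cycle---so that obtaining a cycle of length at least $m/2$ already suffices to conclude $m\le 2(k-1)$ and hence, using the overlap savings in $\ell\le m(k-2)-(m-1)$, that $\ell\le 2(k-2)^2$. Getting the combinatorics of this cycle construction right, rather than the (clean) block reduction, is where the real care is needed.
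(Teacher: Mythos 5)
Your block reduction is sound: a path meets each block in a contiguous stretch of edges, and the tail $(x_{m'},\dots,x_m)$ of $P$ cannot re-enter $B_i$ for $i<r$ (to do so it would have to pass through the cut vertex $c_i$, which already lies on $P'$), so maximality of $P$ really does give $q_i\le p_i$ for all $i<r$ and the problem reduces to bounding $q_r$. The gap is that the remaining sub-lemma --- every path in a $2$-connected graph of circumference at most $k-1$ has length at most $2(k-2)^2$ --- is precisely the hard content of the statement, and what you write for it is not a proof. It is essentially the classical theorem that a $2$-connected graph containing a path of length $\ell$ contains a cycle of length $\Omega(\sqrt{\ell})$, and the two steps you leave open are exactly where that theorem's difficulty lives. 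First, ``the edge $w_iw_{i+1}$ lies on a cycle'' does not by itself produce an $R$-avoiding span covering the gap $i$; one must argue via the bridges of $R$ (if no chord and no component of $B-V(R)$ attaches on both sides of the gap, then $w_iw_{i+1}$ is a cut edge, contradicting the $2$-edge-connectivity of a $2$-connected graph on at least three vertices). Second, and more seriously, extracting a single \emph{simple} cycle of length proportional to $m$ from a minimal chain of $m$ overlapping spans is left entirely to ``tracking constants'': the detours realizing distinct spans may share internal vertices off $R$, so the zigzag cycle is not automatically simple, and repairing this (merging detours that meet into longer spans, or selecting one detour per bridge) is a genuine argument rather than bookkeeping. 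As written, the proposal establishes the easy reduction and asserts the hard lemma.

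For contrast, the paper's proof never needs any statement about $2$-connected graphs. It examines $S=V(P)\cap V(Y)$ directly: if $Y$ never meets $P$ past $x_{m'}$ it extends by the tail of $P$ and so has at most $m'-1$ edges; otherwise $Y$ is split at its first visit to the window $I=\{x_{m'-k+3},\dots,x_{m'+k-3}\}$ into a prefix of length at most $m'-1$ (again by extending along $P$) and a suffix that decomposes into at most $2(k-3)$ segments between consecutive visits to $I$, each of which closes a cycle with a piece of $P$ and hence has length at most $k-2$. If you want to salvage your route, you should cite the path-versus-circumference theorem for $2$-connected graphs explicitly and check that its constant fits inside $2(k-2)^2$; otherwise the sub-lemma needs a complete proof.
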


\begin{proof}
	Let $Y$ be a longest path between $x_1$ and $x_{m'}$ and let $S=V(P)\cap V(Y)$.  

	If all the vertices in $S$ have indices less than or equal to $m'$, then $Y$ together with $(x_{m'+1},\ldots,x_{m})$ defines a path in $G$ from $x_1$ to $x_{m}$, which cannot be longer than $P$ since $P$ is a path of maximum length. Thus in this case we have $|E(Y)|\le m'-1$, which satisfies the condition of the lemma. 
	
	Thus we can assume there is some vertex in $S$ with index greater than $m '$.  Order the set $S$ based on the order that the vertices appear in $Y$, so that $x_1$ is the first vertex of $S$ and $x_{m'}$ the last.  Let $x_j$ be the smallest vertex of $S$ which has $j > m'$, and let $x_i$ be the vertex  which immediately precedes $x_j$ in $S$ based on the ordering of $S$. We claim that $x_i \in \{x_{m'-k+3}, \ldots, x_{m'-1}\}$. Indeed, $i<m'$ by definition of $j$, and if $i < m'-k+3$, then the portion of the path $Y$ between $x_i$ and $x_j$ together with the portion of the path $P$ between those two vertices forms a cycle of length at least $k$, a contradiction.

	Let $I = \{x_{m' - k + 3}, \ldots, x_{m' + k - 3} \}$ and let $V(Y) \cap I = \{x_{j_1}, \ldots , x_{j_p}\}$ with the vertices $x_{j_i}$ in the order in which they appear along the path $Y$. Notice in particular that $x_{j_p} = x_{m'}$, which by the previous claim implies $x_{j_1} \in \{x_{m'-k+3}, \ldots, x_{m'-1}\}$, and also that $V(Y) \cap I \neq \emptyset$. Decompose $Y$ into the subpaths $Y', Y''$ separated at $x_{j_1}$. By the argument in the previous paragraph, $Y'$ contains no vertices to the right of $x_{j_1}$ along $P$, so $Y'$ together with the path along $P$ from $x_{j_1}$ to $x_{m}$ defines a path in $G$ from $x_1$ to $x_{m}$, which cannot be longer than $P$. Hence $|Y'| \leq m'-1$. We will now show that $|Y''|\leq 2(k-2)^2$.

	For each $1 \leq i \leq p-1$, let $Y_i$ be the subpath of $Y''$ from $x_{j_i}$ to $x_{j_{i+1}}$. We claim that $|Y_i| \leq k-2$ for all $i$. Indeed, $Y_i$ is disjoint from the subpath $P_i$ in $P$ that goes from $x_{j_i}$ to $x_{j_{i+1}}$ because $Y_i$ only intersects $I$ at $x_{j_i}$ and $x_{j_{i+1}}$. Thus $P_i$ and $Y_i$ give rise to a cycle of length at least $|Y_i|+1$ in $G$. Since the circumference of $G$ is strictly less than $k$, $Y_i$ can have at most $k-2$ edges.
	
	Since $x_{j_1}, \ldots, x_{j_p} \in I$, we have $p \leq 2(k-3)+1$. Thus we have at most $2(k-3)$ paths $Y_i$, $1\leq p-1$. In total, $Y''$ can be decomposed into at most $2(k-3)$ many paths of length at most $k-2$, so $|Y''| \leq 2(k-2)^2$.  Combining this with the bound $|Y'|\le m'-1$ gives the desired result.

\end{proof}

With this we can now prove our main result for this subsection.

\begin{proof}[Proof of Proposition~\ref{prop:opt}]
Let $k'=3+(k-2)(s-2)$ with $s$ as in the hypothesis of the proposition. Initially we play as follows: if there is no path with at least $k'$ vertices in $G^{(t)}$ at the start of our turn, then we add an edge between an isolated vertex and the endpoint of some longest path in the graph.  If $n$ is sufficiently large, then eventually this will lead us to start our turn with some longest path in $G^{(t)}$ of length at least $k'$, call it $(x_1,\ldots,x_{m})$  with $m\ge k'$.  Once this is achieved, we subsequently attempt to add the edge $x_1x_{k'}$ to $G^{(t)}$.  If this is a legal move, then we are done since $k'\ge k$, and we have the desired result.

We can thus assume that adding the edge $x_{1}x_{k'}$ is not a legal move.  That is, $G^{(t)}$ must contain a path $Y=(y_1,y_2,\ldots,y_q)$ with $y_1=x_1,\ y_q=x_{k'}$ and $C_q\in \c{C}$.  Note by the previous lemma and the assumption $(x_1,\ldots,x_{m})$ a longest path implies $q\le k'-1+2(k-2)^2$, so by hypothesis of $\c{C}$ we must have $q<s$.
	
	Let $i_1$ be such that $x_{i_1}$ is the first $x_i$ vertex to appear in $Y$ (so $i_1=1$), and define $i_2,\ldots,i_p$ in an analogous same way.  This implies for all $1\le j<p$ that $G^{(t)}$ contains a cycle of length at least $i_j-i_{j-1}+1$.  Thus we can assume $i_j\le i_{j-1}+k-2$ for all $j$, as otherwise $G^{(t)}$ contains a cycle of length at least $k$.  By applying this bound repeatedly we find \[ 1+(k-2)(p-1)\ge i_p=k'=3+(k-2)(s-2).\]  Thus we must have $s\le p\le q$, contradicting that $q<s$ from above. Therefore it must be the case that $x_{1}x_{k'}$ is a legal move, allowing us to create a graph with circumference at least $k$.
\end{proof}

	\section{Concluding Remarks}\label{S:con}
	In this paper we considered the saturation game where Max made the first move, and one could instead consider the analogous game where Mini makes the first move.   It was shown by Hefetz, Krivelevich, Naor, and Stojakovi{\'c} \cite{HKNS} that in general these two saturation games can have dramatically different scores.  However, all of our proofs allowed for either player to implement the proposed strategies, and from this one can easily show that all of the bounds of our theorems continue to hold even if Mini makes the first move.

	Many of the results in this paper and in \cite{spiro} focused on families of odd cycles.  This is because in theory the game saturation number of a family of odd cycles could be anywhere between linear and quadratic.  Motivated by this, we ask the following.
	
	\begin{problem}
		Determine whether Theorem~\ref{thm:dense} continues to hold if $k$-dense families do not require $C_{k+1}\in \c{C}$ for $k$ odd.
	\end{problem}
	We have already shown that this is true for $k=5$, and we believe that with more work one can use our methods to show that this also holds for $k=7$ and possibly $k=9$, but beyond this new ideas are needed.

	In this paper we focused primarily on upper bounds for the game saturation number of a family of (odd) cycles.  It would be of interest to consider lower bounds as well.
	\begin{problem}
	Determine non-trivial asymptotic lower bounds on $\sat_g(n,\c{C}_k^o)$ for $k\ge 5$ odd.
	\end{problem}
	We can prove one such bound when $k=5$. By essentially the same argument used in the proof of Theorem~\ref{thm:fin}, one can show that Mini can maintain that $G^{(t)}$ has circumference at most 4 in the $\c{C}_5^o$-saturation game.  It was proven by Ferrara, Jacobson, Milans, Tennenhouse, and Wenger~\cite[Theorem~2.17]{Ferrara} that $\sat(n,\{C_5,C_6,C_7,\ldots\})=\f{10}{7}(n-1)$, which implies that $\sat_g(n,\c{C}_5^o)\ge \f{10}{7}(n-1)$.
	
	Lastly, we note that all of our examples of families of cycles $\c{C}$ with linear game saturation number contained infinitely many cycles.  It is unclear if every family of cycles with linear game saturation must have infinite size.
	
	\begin{problem}
		Determine whether $\sat_g(n,\c{C})=\om(n)$ whenever $\c{C}$ is a finite collection of cycles.
	\end{problem}
	
	\subsection*{Acknowledgments}
	
	The authors would like to thank Michael Ferrara, Florian Pfender, Cedar Wiseman, and Ted Tranel for their support and contributions during the initial discussions of this paper. This work was completed in part at the 2019 Graduate Research Workshop in Combinatorics, which was supported in part by NSF grant 1923238, NSA grant H98230-18-1-0017, a generous award from the Combinatorics Foundation, and Simons Foundation Collaboration Grants 426971 (to M. Ferrara), 316262 (to S. Hartke) and 315347 (to J. Martin).  
	
T.~Masa\v{r}\'ik received funding from the European Research Council (ERC) under the European Union’s Horizon 2020 research and innovation programme Grant Agreement 714704. He completed a part of this work while he was a postdoc at Simon Fraser University in Canada, where he was supported through NSERC grants R611450 and R611368. G. McCourt was supported in part by the National Science Foundation Research Training Group Grant DMS-19372.
	S.~Spiro was supported by the National Science Foundation Graduate Research Fellowship under Grant DGE-1650112.

	\bibliographystyle{plainurl}
	\bibliography{bib}
	
\end{document}